\newcommand{\mM}{\mathcal{M}}
\newcommand{\fa}{\mathfrak{a}}
\newcommand{\fq}{\mathfrak{q}}
\newcommand{\bfA}{\mathbf{A}}
\newcommand{\bfC}{\mathbf{C}}
\newcommand{\bfF}{\mathbf{F}}
\newcommand{\bfQ}{\mathbf{Q}}
\newcommand{\bfR}{\mathbf{R}}
\newcommand{\bfZ}{\mathbf{Z}}
\newcommand{\Oo}{\mathcal{O}}
\newcommand{\ov}{\overline}
\newcommand{\be}{\begin{equation}}
\newcommand{\ee}{\end{equation}}
\newcommand{\bes}{\begin{equation*}}
\newcommand{\ees}{\end{equation*}}
\newcommand{\bs}{\begin{split}}
\newcommand{\es}{\end{split}}
\newcommand{\bss}{\begin{split*}}
\newcommand{\ess}{\end{split*}}
\newcommand{\bmat}{\left[ \begin{matrix}}
\newcommand{\emat}{\end{matrix} \right]}
\newcommand{\bsmat}{\left[ \begin{smallmatrix}}
\newcommand{\esmat}{\end{smallmatrix} \right]}
\newcommand{\bml}{\begin{multline}}
\newcommand{\eml}{\end{multline}}
\newcommand{\bmls}{\begin{multline*}}
\newcommand{\emls}{\end{multline*}}
\DeclareMathOperator{\Ext}{Ext}
\DeclareMathOperator{\GL}{GL}
\DeclareMathOperator{\Hom}{Hom}
\DeclareMathOperator{\Nm}{Nm}
\newcommand{\tr}{\textup{tr}\hspace{2pt}}
\theoremstyle{plain}
\newtheorem{thm}{Theorem}
\newtheorem{prop}[thm]{Proposition}
\newtheorem{cor}[thm]{Corollary}
\newtheorem{lemma}[thm]{Lemma}
\newtheorem{conj}[thm]{Conjecture}
\theoremstyle{definition}
\newtheorem{rem}[thm]{Remark}
\numberwithin{thm}{section}
\numberwithin{equation}{section}
\begin{document}

\title[Bloch-Kato for Asai $L$-function]{On the Bloch-Kato conjecture for the Asai $L$-function}
\author{Tobias Berger$^1$} 
\address{$^1$School of Mathematics and Statistics, University of Sheffield, Hicks Building, Hounsfield Road, Sheffield S3 7RH, UK.}

\subjclass[2000]{11F80, 11F46, 11F55}

\keywords{Galois representations, Bloch-Kato conjecture, Siegel modular forms}

\thanks{The author's research was supported by the EPSRC First Grant EP/K01174X/1.}

\begin{abstract}
Following Ribet's seminal 1976 paper there have been many results employing congruences between stable cuspforms and lifted forms to construct non-split extensions of Galois representations. We show how this strategy can be extended to construct elements in the Bloch-Kato Selmer groups of $\pm$-Asai (or tensor induction) representations associated to Bianchi modular forms. We prove, in particular, how the Galois representation associated to a suitable low weight Siegel modular form produces elements in the Selmer group for exactly the Asai representation ($+$ or $-$) that is critical in the sense of Deligne. 

We further outline a strategy using an orthogonal-symplectic theta correspondence to prove the existence of such a Siegel modular form and explain why we expect this to be governed by the divisibility of the near-central critical value of the Asai $L$-function, in accordance with the Bloch-Kato conjecture.
\end{abstract}

\maketitle

\section{Introduction} 
Ribet proved in \cite{Ribet76} that, if a prime $q>2$ divides the numerator of the $k$-th Bernoulli number for $k \geq 4$ an even integer, then $q$ divides the order of the part of the class group of $\bfQ(\mu_q)$ on which ${\rm Gal}(\bfQ(\mu_q)/\bfQ)$ acts by the $(1-k)$-th power of the mod $q$ cyclotomic character $\ov{\epsilon}$. The argument (in fact a slight variance of Ribet's proof outlined in \cite{Khare00}) is as follows: One considers the weight $k$ Eisenstein series for ${\rm SL}_2(\bfZ)$ and proves the existence of a weight $k$ cuspidal eigenform that is congruent to the Eisenstein series. Ribet then shows that one can find a lattice in the irreducible odd  $q$-adic Galois representation $\rho_f$ associated to $f$ such that its mod $q$ reduction gives a non-split extension $\begin{pmatrix} 1&*\\0& \ov{\epsilon}^{k-1}\end{pmatrix}$. Note that this proves (part of) one direction of the Bloch-Kato conjecture for the Riemann zeta function (proven in \cite{BlochKato90} Theorem 6.1 (i)) in so far as it relates the $q$-divisibility of the value $\frac{\zeta(k)}{\pi^k}$ for the even integer $k$ to that of the order of the Selmer group for the critical motive $\bfQ(1-k)$.We note that $\rho_f$ is polarized in the sense that $$\rho_f^{\vee} \cong \rho_f \otimes {\rm det}(\rho_f)^{-1} \cong \rho_f \otimes \epsilon^{1-k},$$ and that the characters occurring in the reduction get swapped under this polarization. 

Ribet's strategy  employing congruences between ``stable'' cuspforms and lifted forms has since been applied to construct many more examples of extensions of one representation by another, i.e. constructing elements in the Selmer groups of tensor product representations. We show in this paper how to go beyond tensor product representations to Asai (or tensor induction) representations.  Using polarized (essentially self-dual) $4$-dimensional Galois representations we provide evidence for the Bloch-Kato conjecture for the Asai $L$-function associated to a Bianchi modular form.

Let $K$ be an imaginary quadratic field with ring of integers $\Oo_K$ and $\pi$ be a cuspidal automorphic representation for ${\rm GL}_2(\bfA_K)$ which is not a base change from $\bfQ$ such that $\pi_{\infty}$ has $L$-parameter 
\begin{align*}
W_{\bfC}=\bfC^\times &\to 
{\rm GL}_2(\bfC): z \mapsto  \begin{pmatrix} (z/|z|)^{k-1}&0\\0&(z/|z|)^{1-k}\end{pmatrix} 
\end{align*} for $k \in \bfZ_{\geq 2}$ and the central character of $\pi$ is trivial. For a prime $q>2$ let $E_{\fq}$ a sufficiently large finite extension of $\bfQ_q$ with ring of integers $\Oo_{\fq}$.

The Asai $L$-function of the title will be defined as a Langlands $L$-function in section \ref{Asaidefinition}. For this introduction we recall  the following explicit description in the special case where $K$ has class number 1 and $k=2$: If $\pi$ has $L$-function $$L(s, \pi)=\sum_{(0) \neq \fa \subset \Oo_K} c(\fa) N(\fa)^{-s}$$ then $$L(s, {\rm As}^+(\pi))=\zeta(2s) \sum_{m=1}^{\infty} \frac{c(m \Oo_K)}{m^s}.$$

We prove in this article the Galois representation half of the argument for the
following part of the Bloch-Kato conjecture \cite{BlochKato90} for the Asai $L$-function:
$$\text{If }\fq\mid L^{\rm int}(1, {\rm As}^{(-1)^k}(\pi)) \text{ then }\fq \mid \# H^1_f(\bfQ,{\rm As}^{(-1)^k}(\rho_{\pi}) (1-k) \otimes E_{\fq}/\Oo_{\fq}),$$ where $\rho_{\pi}:G_K \to {\rm GL}_2(E_{\fq})$ is the Galois representation associated to $\pi$ by Taylor et al. and the (conjecturally motivic) Asai (or tensor induction) representations ${\rm As}^{\pm}(\rho_{\pi})$ are particular extensions of $\rho_{\pi} \otimes \rho_{\pi}^c$ to $G_{\bfQ}$ satisfying ${\rm As}^-(\rho_{\pi}) \cong {\rm As}^+(\rho_{\pi}) \otimes \chi_{K/\bfQ}$, where $\chi_{K/\bfQ}$ denotes the quadratic character  associated to $K/\bfQ$.

Assuming for now the existence of a Galois representation $\tilde R:G_{\bfQ} \to {\rm GL}_4(E_{\fq})$ with $(\tilde R)^{\vee} \cong \tilde R \otimes \Psi$ such that $\tilde R \equiv {\rm ind}_K^{\bfQ}(\rho_{\pi}) \mod{\fq}$ with $\tilde R|_{G_K}$ absolutely irreducible one constructs a suitable extension which proves that the Bloch-Kato Selmer group $H^1_f(K,(\rho_{\pi}^{\vee} \otimes \rho_{\pi}^c) \otimes E_{\fq}/\Oo_{\fq})$ is non-trivial. The key result is Theorem \ref{main} which states that this extension does, in fact,  lift to an element of the Selmer group for the Asai representation of the correct parity, i.e. that it lies in the $(-1)^k$-eigenspace for the action of complex conjugation on $H^1_f(K,{\rm As}^+(\rho_{\pi})(1-k) \otimes E_{\fq}/\Oo_{\fq})$. This eigenspace is isomorphic to  $H^1_f(\bfQ, {\rm As}^{(-1)^k}(\rho_{\pi})(1-k) \otimes E_{\fq}/\Oo_{\fq})$ (see Lemma \ref{Selmerres}), and it is exactly ${\rm As}^{(-1)^k}(\rho_{\pi})$ that is critical at $k$ (in the sense of Deligne) whereas ${\rm As}^{(-1)^{(k+1)}}(\rho_{\pi})$ is not. To prove this we relate in Lemma \ref{explicit} the complex conjugation action to $(-1)^k$ times the  involution arising from the polarization $$(\tilde R|_{G_K})^{c \vee} \cong \tilde R|_{G_K} \otimes \epsilon^{1-k}.$$ For the latter action \cite{BellaicheChenevierbook} Proposition 1.8.10 showed that the eigenvalue is given by the sign of the representation $\tilde R|_{G_K}$, as defined in \cite{BellaicheCheneviersign}. We prove that this sign is $1$ in Lemma \ref{6.4} by deducing that $\tilde R$ is symplectic from its congruence to ${\rm ind}_K^{\bfQ}(\rho_{\pi})$ (which can be equipped with an even or odd symplectic pairing as $\Lambda^2({\rm ind}_K^{\bfQ}(\rho_{\pi}))$ contains two $G_{\bfQ}$-invariant lines under our assumption on the central character of $\pi$). In Lemma \ref{6.4} we  assume that $\Psi$ is congruent to the odd choice of the similitude character for ${\rm ind}_K^{\bfQ}(\rho_{\pi})$, which would be satisfied for a representation coming from a stable Siegel cuspform congruent to the theta lift.

Our result should easily extend to the case of CM fields and $\pi$ satisfying the condition on the central character in \cite{CPMok14}. In fact, the method of proving Theorem \ref{main} extends to general Asai (or tensor induction) representations ${\rm As}^{\pm}(\rho)$ for $\rho:G_{K} \to {\rm GL}_n(E_{\fq})$ for $K/K^+$ a CM field, including extensions of adjoint representations of polarized regular motives as studied in \cite{Harris13}.  This generalization and an application to proving the oddness of residually reducible polarizable Galois representations will be discussed in future work.

For the automorphic half of the argument producing a suitable Galois representation $\tilde R$ as above we suggest replacing the Eisenstein series in Ribet's proof in \cite{Ribet76} by the theta correspondence, i.e. finding a suitable theta series and proving that it is congruent to a ``stable" cuspform whose associated $q$-adic Galois representation is irreducible  and allows one to construct an element in the Selmer group. (A complementary approach using Hermitian Eisenstein series is studied in \cite{Dummigan14}, for which the preceding results of this paper can also be applied, see Remark \ref{rem_dum} for a discussion.) By the Rallis inner product formula (and the expectation that $q$-divisibility of the Petersson norm period ratio  governs congruences to stable forms) one needs to identify a group and an automorphic representation $\sigma$ for which $L^S(1,\sigma, {\rm std})=L^S(1, {\rm As}^{(-1)^k}(\pi))$. By \cite{GI} Lemma 7.1 (see also \cite{Ro01} Lemma 8.1) this is the case for the representation $\sigma=\pi \boxplus \chi_{K/\bfQ}^k$ of $${\rm GSO}(3,1)(\bfA)={\rm GL}_2(\bfA_K) \times \bfA^*/ \{ (z {\rm Id}_2, \Nm_{K/\bfQ} z^{-1}), z \in \bfA_K^*\}.$$  A suitable extension $\hat{\sigma}$ to ${\rm GO}(3,1)$ then has a non-zero theta lift to ${\rm GSp}_4(\bfA)$, as studied by \cite{HST,Ro01, Takeda2009, Takeda2011}, see section \ref{s8} for more details. This theta lift is associated to the Galois representation ${\rm Ind}_K^{\bfQ}(\rho_{\pi})$, %(see e.g. calculation  of unramified Satake parameters in \cite{HST} Lemmas 10 and 11)
in particular it restricts to $\rho_{\pi} \oplus \rho_{\pi}^c$ on $G_K$.

We do not establish the congruence of the theta lift to a stable Siegel cuspform in this paper, but describe in Proposition \ref{nonthetairred} conditions a cuspidal automorphic representation $\Pi$ for ${\rm GSp}_4(\bfA)$ would have to satisfy to give rise to a Galois representation $\tilde R$ as above. Under some additional conditions on $\ov{\rho}_{\pi}$ we prove that one needs to find a $\Pi$ with $\rho_{\Pi} \equiv {\rm ind}_K^{\bfQ} \mod{\fq}$ such that $\Pi$ is not a theta lift of  $\pi \boxplus \chi$ (or any Jacquet-Langlands lifts of $\pi$). That such a $\Pi$ exists is plausible, as one could apply (similar to the proofs for tensor product motives e.g. in \cite{BDSP} and \cite{AgarwalKlosin}) the method using pullback formulas of Siegel Eisenstein series for producing such a congruence to the cohomological Kudla-Millson theta lifts, whose $p$-integrality has been proven in \cite{Berger14}. In particular, \cite{Katsurada08} Lemma 5.1 would allow us to produce a congruence to a non theta-lift, as all the theta lifts appearing in the pullback formula would involve the same Asai $L$-value in the denominator (due to their Petersson norm being expressed by the Rallis inner product formula).
%$\Pi \not \in \Pi(\chi, \pi)$.

To explain why $\Pi$ not being a theta lift implies that the associated $\rho_{\Pi}$ is not isomorphic to ${\rm ind}_K^{\bfQ}(\rho_{\pi})$ (which under the additional assumption that  $\ov{\rho}_{\pi}$ does not deform non-trivially implies that $\rho_{\Pi}|_{G_K}$ is irreducible) we show in Corollary \ref{cor76} that the elements of the global $L$-packet $\Pi(\chi, \pi)$ of the theta lift are indeed characterized by their associated Galois representation being isomorphic to ${\rm ind}_K^{\bfQ}(\rho_{\pi})$. The statement above then follows from work of Roberts and Takeda that shows that all elements of $\Pi(\chi, \pi)$ can be realised via the theta correspondence between a quadratic space of signature $(3,1)$ and ${\rm GSp}_4$ (see Proposition \ref{thetalifts}).

\section*{Acknowledgements}

The author would like to thank Ga\"etan Chenevier, Neil Dummigan, Kris Klosin, Chris Skinner, Jack Thorne and Jacques Tilouine for helpful conversations related to the topics of this article and to Wee Teck Gan, in particular, for comments leading to Proposition \ref{Lparam} and Lemma \ref{nearequiv}.

\section{Asai transfer and tensor induction} \label{Asaidefinition}
We recall from \cite{Krishna12} the definition of the Asai transfer of $\pi$: The Langlands dual of the algebraic group $R_{K/\bfQ} {\rm GL}_2$ is given by ${}^L(R_{K/\bfQ} {\rm GL}_2)=({\rm GL}_2(\bfC) \times {\rm GL}_2(\bfC)) \rtimes G_{\bfQ}$. There are 4-dimensional representations
$$r^{\pm}:{}^L(R_{K/\bfQ} {\rm GL}_2/K)(\bfC)\to {\rm GL}(\bfC^2 \otimes \bfC^2)$$  given by $$r^{\pm}(g,g',\gamma)(x \otimes y)=g(x) \otimes g'(y) \text{ for } \gamma|_K=1$$ and $$r^{\pm}(1,1,\gamma)(x \otimes y)=\pm y \otimes x.\text{ for } \gamma|_K\neq1$$ For each place $v$ we denote the local $L$-group homomorphisms obtained from $r^{\pm}$ by restriction by $r^{\pm}_v$.

Let $\pi$  be a cuspidal automorphic representation for ${\rm GL}_2(\bfA_K)$. Then we may consider $\pi$ as a representation of $R_{K/\bfQ} {\rm GL}_2(\bfA)$ and factorize it as a restricted tensor product $$\pi=\otimes_v \pi_v,$$ where each $\pi_v$ is an irreducible admissible representation of ${\rm GL}_2(K \otimes_{\bfQ} \bfQ_v)$, with corresponding L-parameter $\phi_v: W'_{\bfQ_v} \to ({\rm GL}_2(\bfC) \times {\rm GL}_2(\bfC)) \rtimes G_{\bfQ_v}$. Let ${\rm As}^{\pm}(\pi_v)$ now be the irreducible admissible representation of ${\rm GL}_4(\bfQ_v)$ corresponding to the parameter $r^{\pm}_v \circ \phi_v$ under the local Langlands correspondence and put ${\rm As}^{\pm}(\pi):=\otimes_v {\rm As}^{\pm}(\pi_v)$. By Krishnamurty \cite{Krishna03} and Ramakrishnan \cite{Ramak02} this Asai transfer  ${\rm As}^{\pm}(\pi)$ is known to be an isobaric automorphic representation of ${\rm GL}_4(\bfA)$. 

The corresponding Langlands $L$-function is  defined by $$L(s, \pi, r^{\pm})=\prod_v L(s,\pi_v, r^{\pm}_v)$$ with $L(s,\pi_v, r^{\pm}_v)=L(s, r^{\pm} \circ \phi_v)$.  For unramified $v$ and $\pi_v$ spherical the latter is given by $$\det(I-r^{\pm}(A(\pi_v))p^{-s})^{-1},$$ where $A(\pi_v)$ is the semisimple conjugacy class (Satake parameter) in ${}^L(R_{K/\bfQ} {\rm GL}_2)(\bfC)$.

We will not use ${\rm As}^{\pm}(\pi)$ in the following but rather consider the  corresponding operation for Galois representations, usually called ``tensor (or multiplicative) induction": Consider a representation $\rho:G_K \to {\rm GL}(V)$ for an $n$-dimensional vector space $V$ and write $c$ for the non-trivial element of ${\rm Gal}(K/\bfQ)$. Then we define the following  canonical extensions of $V \otimes V^c$ to $G_{\bfQ}$: 
$${\rm As}^{\pm}(\rho):G_{\bfQ} \to {\rm GL}(V \otimes V^c)$$  given by $${\rm As}^{\pm}(\rho)(g)(x \otimes y)=\rho(g)x \otimes \rho^c(g)(y) \text{ for } g|_K=1$$ and $${\rm As}^{\pm}(\rho)(c)(x \otimes y)=\pm y \otimes x,$$ where $c \in G_{\bfQ}$ is a complex conjugation (which we fix from now on). 
%One checks that different choices of $\tilde c$ give isomorphic representations. 

We first note that ${\rm As}^{-}(\rho)\cong {\rm As}^{+}(\rho) \otimes \chi_{K/\bfQ}$ for $\chi_{K/\bfQ}$ the quadratic character of $G_{\bfQ}$ corresponding to $K/\bfQ$. We quote further properties from \cite{Prasad92} Lemma 7.1:

\begin{lemma}[Prasad] \label{lemPras}
\begin{enumerate}
\item For representations $\rho_1$ and $\rho_2$ of $G_K$, $${\rm As}^{+}(\rho_1 \otimes \rho_2)={\rm As}^{+}(\rho_1) \otimes {\rm As}^{+}(\rho_2).$$
\item $${\rm As}^{\pm}(\rho^{\vee}) \cong {\rm As}^{\pm}(\rho)^{\vee}$$
\item For a one-dimensional representation $\chi$ of $G_K$, ${\rm As}^{+}(\chi)$ is the one-dimensional representation of $G_{\bfQ}$ obtained by composing $\chi$ with the transfer map from $G_{\bfQ}^{\rm ab}$ to $G_K^{\rm ab}$.
\end{enumerate}
\end{lemma}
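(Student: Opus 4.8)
These three identities are formal properties of the tensor-induction functor, so the plan would be, in each case, to write down the evident candidate isomorphism and check directly that it is $G_{\bfQ}$-equivariant. Since on the subgroup $G_K$ all the operators in sight act diagonally, $G_K$-equivariance is automatic, and the only thing needing attention is compatibility with the action of a chosen lift $\tilde c\in G_{\bfQ}$ of the non-trivial element of ${\rm Gal}(K/\bfQ)$; for this one uses the defining rule ${\rm As}^{\pm}(\rho)(\tilde c)(x\otimes y)=\pm\,y\otimes\rho(\tilde c^2)x$ together with $\tilde c^2\in G_K$ and $\tilde c^{-1}\tilde c^2\tilde c=\tilde c^2$, and one notes (as already in the text) that replacing $\tilde c$ by another lift changes the relevant isomorphisms only by conjugation by an element of $G_K$, so all identifications are canonical. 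For part (1) I would use the canonical isomorphism $\Phi$ from $(V_1\otimes V_1^c)\otimes(V_2\otimes V_2^c)$ onto $(V_1\otimes V_2)\otimes(V_1^c\otimes V_2^c)=(V_1\otimes V_2)\otimes(V_1\otimes V_2)^c$ that interchanges the two middle tensor factors, namely $(x_1\otimes y_1)\otimes(x_2\otimes y_2)\mapsto(x_1\otimes x_2)\otimes(y_1\otimes y_2)$: on $G_K$ it visibly intertwines the two diagonal actions, and on $\tilde c$ one checks that both ${\rm As}^{+}(\rho_1\otimes\rho_2)$ and the transport of ${\rm As}^{+}(\rho_1)\otimes{\rm As}^{+}(\rho_2)$ through $\Phi$ send $(x_1\otimes x_2)\otimes(y_1\otimes y_2)$ to $(y_1\otimes y_2)\otimes(\rho_1(\tilde c^2)x_1\otimes\rho_2(\tilde c^2)x_2)$. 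I would flag that the two $+$ signs here multiply to $+$, which is exactly why the identity is special to ${\rm As}^{+}$ (indeed ${\rm As}^{-}(\rho_1)\otimes{\rm As}^{-}(\rho_2)\cong{\rm As}^{+}(\rho_1\otimes\rho_2)$).

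For part (2), write $W=V\otimes V^c$ and use the tautological identifications $(V^{\vee})^c=(V^c)^{\vee}$ and $W^{\vee}=V^{\vee}\otimes(V^c)^{\vee}$, so that ${\rm As}^{\pm}(\rho^{\vee})$ and ${\rm As}^{\pm}(\rho)^{\vee}$ act on one and the same space $V^{\vee}\otimes(V^c)^{\vee}$, and it remains only to see that the two actions agree. Equivalently, the pairing $W\times\big(V^{\vee}\otimes(V^c)^{\vee}\big)\to\bfC$ given by $\langle x\otimes y,\ \xi\otimes\eta\rangle=\xi(x)\,\eta(y)$ is $G_{\bfQ}$-invariant: on $G_K$ it is, because ${\rm As}^{\pm}(\rho^{\vee})|_{G_K}$ is by construction the contragredient of ${\rm As}^{\pm}(\rho)|_{G_K}$; and for $\tilde c$ one has the one-line computation
$$\big\langle\,\pm(y\otimes\rho(\tilde c^2)x),\ \pm(\eta\otimes\rho^{\vee}(\tilde c^2)\xi)\,\big\rangle=\eta(y)\cdot\big(\rho^{\vee}(\tilde c^2)\xi\big)\big(\rho(\tilde c^2)x\big)=\eta(y)\,\xi(x)=\langle x\otimes y,\ \xi\otimes\eta\rangle,$$
using $(\pm1)^2=1$ and that $\rho^{\vee}(\tilde c^2)$ is the inverse transpose of $\rho(\tilde c^2)$. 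Hence this perfect pairing identifies ${\rm As}^{\pm}(\rho^{\vee})$ with ${\rm As}^{\pm}(\rho)^{\vee}$.

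For part (3), take $\dim V=1$: then $W=\bfC\otimes\bfC^c$ is one-dimensional, and the defining formulas say that $g\in G_K$ acts on $W$ by the scalar $\chi(g)\chi^c(g)$ while $\tilde c$ acts by the scalar $\chi(\tilde c^2)$. On the other hand, computing the transfer (Verlagerung) map $\mathrm{ver}\colon G_{\bfQ}^{\rm ab}\to G_K^{\rm ab}$ with the coset representatives $\{1,\tilde c\}$ of $G_K$ in $G_{\bfQ}$ yields $\mathrm{ver}(g)\equiv g\cdot(\tilde c g\tilde c^{-1})\bmod[G_K,G_K]$ for $g\in G_K$ and $\mathrm{ver}(\tilde c)\equiv\tilde c^2$, so $\chi\circ\mathrm{ver}$ is given by exactly these same two formulas; therefore ${\rm As}^{+}(\chi)=\chi\circ\mathrm{ver}$. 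The main obstacle in the whole argument---though it is entirely routine---is simply the bookkeeping in the $\tilde c$-equivariance checks for parts (1) and (2), where one must be consistent about the conjugation convention defining $\rho^c$; the only external ingredient is the standard coset formula for the transfer invoked in part (3).
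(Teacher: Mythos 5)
Your proposal is correct. Note that the paper does not prove this lemma at all --- it simply quotes it from \cite{Prasad92} Lemma 7.1 --- so your direct verification is a self-contained substitute for the citation rather than an alternative to an argument in the text. All three checks go through as you describe: since $G_{\bfQ}$ is generated by $G_K$ together with the chosen lift $\tilde c$, equivariance on $G_K$ (where everything acts diagonally) plus the single computation at $\tilde c$ suffices in each case; your $\tilde c$-computations in (1) and (2) are right, including the observation that $\rho^{c}(\tilde c^{2})=\rho(\tilde c^{2})$ and that $(\rho^{\vee}(\tilde c^{2})\xi)(\rho(\tilde c^{2})x)=\xi(x)$, and your sign bookkeeping correctly isolates why (1) is a statement about ${\rm As}^{+}$ (with the bonus identity ${\rm As}^{-}\otimes{\rm As}^{-}\cong{\rm As}^{+}$ of the tensor product). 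The transfer computation in (3) with coset representatives $\{1,\tilde c\}$, giving ${\rm ver}(g)\equiv g\cdot\tilde c g\tilde c^{-1}$ for $g\in G_K$ and ${\rm ver}(\tilde c)\equiv\tilde c^{2}$, matches the scalars by which ${\rm As}^{+}(\chi)$ acts, and since both sides are characters of $G_{\bfQ}$ agreeing on a generating set they coincide; the independence of the choice of $\tilde c$ is, as you say, already noted in the paper.
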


We also recall from \cite{Krishna12} p.1362-3 how $r^{\pm}$ and ${\rm As}^{\pm}$ are related in the $2$-dimensional case:
Given $\sigma: G_K \to {\rm GL}_2(E)$ (where $E$ is a field) we can define a homomorphism $$\tilde \sigma: G_{\bfQ} \to {}^L(R_{K/\bfQ} {\rm GL}_2)(E)=({\rm GL_2}(E) \times {\rm GL}_2(E)) \rtimes G_K$$ as follows:

$$\tilde \sigma(g)= \begin{cases} (\sigma(g), \sigma(cg c^{-1}), g) & \text{ if } g \in G_K\\ (\sigma(g c^{-1}), \sigma(cg), g) & \text{ if } g \notin G_K.\end{cases}$$

It is now easy to check that $${\rm As}^{\pm}(\sigma)=r^{\pm} \circ \tilde \sigma:G_{\bfQ} \to {\rm GL}_4(E).$$

\section{Relating Selmer groups over $K$ and $\bfQ$}
In this section we record how to relate Bloch-Kato Selmer groups over $K$ and $\bfQ$: 

For $L$ and $E$ number fields and $q$ a prime consider $V$ a continuous finite-dimensional representation of $G_{L}$ over $E_{\fq}$  (with $\fq \mid q$), a finite extension of $\bfQ_q$ with ring of integers $\Oo_{\fq}$. 
 Let $T \subseteq V$ be a
$G_{L}$-stable $\Oo_{\fq}$-lattice and put $W=V/T$ and $W_n=\{x \in W: \fq^nx=0\}$.
Let $\Sigma$ be a finite set of places of $L$. Following Bloch-Kato (see also \cite{DiamondFlachGuo04} Section 2.1) we define for $M=W, W_n$ the following Selmer
groups:

	\be H^1_{\Sigma}(L,M)={\rm ker}(H^1(L,M) \to \prod_{v \notin \Sigma} H^1(L_v,M)/H^1_f(L_v,M)),\ee where 
$H^1_f(L_v, W):={\rm im}(H^1_f(L_v,V) \to H^1(L_v,W))$ and 
$$H^1_f(L_v,V)=\begin{cases} {\rm ker}(H^1(L_v,V) \to H^1(I_v, V)) & \text{ if } v \nmid q, \infty\\ {\rm ker}(H^1(L_v,V) \to H^1(L_v, V \otimes B_{\rm cris})) & \text{ if } v \mid q\\ 0 & \text{ if } v \mid \infty. \end{cases}$$
When $V$ is short crystalline we refer the reader to \cite{DiamondFlachGuo04} Section 2.1 and \cite{BergerKlosin13} Section 5 for the definition of the local subgroups $H^1_f(L_v, W_n)$ for finite modules using Fontaine-Lafaille theory.

\begin{lemma} \label{Selmerres}
Let $V$ be a $G_{\bfQ}$-representation with $T$ and $W$ as above. Let $\Sigma^+$ be a finite set of places of $\bfQ$ and $\Sigma$ the set of places of $K$ above  $\Sigma^+$.  For $q>2$ we have $$H^1_{\Sigma}(K,W) = H^1_{\Sigma}(K,W)^+ \oplus  H^1_{\Sigma}(K,W)^-,$$ where the superscripts indicate the eigenspaces for the action of $c \in G_{\bfQ}$.

Furthermore the restriction map from $G_{\bfQ}$ to $G_K$ induces isomorphisms $$H^1_{\Sigma^+}(\bfQ,W) \overset{\sim}{\to} H^1_{\Sigma}(K,W)^+$$ and $$H^1_{\Sigma^+}(\bfQ,W \otimes \chi_{K/\bfQ}) \overset{\sim}{\to} H^1_{\Sigma}(K,W)^-.$$

\end{lemma}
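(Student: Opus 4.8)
The plan is to decompose the Selmer group using the action of $\Gal(K/\bfQ) = \{1, c\}$, which has order $2$ invertible in $\Oo_{\fq}$ since $q > 2$, and then to identify the eigenspaces with Selmer groups over $\bfQ$ via inflation-restriction. First I would note that since $|\Gal(K/\bfQ)| = 2$ is a unit in $\Oo_{\fq}$, any $\Oo_{\fq}[\Gal(K/\bfQ)]$-module $M$ (such as $H^1_\Sigma(K,W)$, which carries a natural $c$-action because $V$, $T$, $W$ are $G_{\bfQ}$-representations and $\Sigma$ is $\Gal(K/\bfQ)$-stable) decomposes as $M = M^+ \oplus M^-$ under the idempotents $\frac{1}{2}(1 \pm c)$; this gives the first displayed formula, provided one checks that the $c$-action genuinely preserves $H^1_\Sigma(K,W)$, i.e. that $c$ permutes the local conditions $H^1_f(K_v, W)$ among places $v \mid v^+$ for $v^+ \notin \Sigma^+$ — which holds because $c$ intertwines $H^1(K_v, W)$ with $H^1(K_{cv}, W)$ compatibly with the defining kernels (inertia for $v \nmid q$, $B_{\rm cris}$ for $v \mid q$, and the Fontaine-Lafaille conditions for finite modules).

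Next I would invoke the inflation-restriction exact sequence for the extension $G_K \trianglelefteq G_{\bfQ}$ with quotient $\Gal(K/\bfQ)$. For a $G_{\bfQ}$-module $M$ we get
\begin{equation*}
0 \to H^1(\Gal(K/\bfQ), M^{G_K}) \to H^1(\bfQ, M) \to H^1(K, M)^{\Gal(K/\bfQ)} \to H^2(\Gal(K/\bfQ), M^{G_K}),
\end{equation*}
and since $\Gal(K/\bfQ)$ has order $2$, killed in the module $M = W$ (which is $q$-power torsion, $q$ odd) — more precisely its Tate cohomology is $2$-torsion hence vanishes on $\Oo_{\fq}$-modules — the outer terms die and restriction gives $H^1(\bfQ, W) \xrightarrow{\sim} H^1(K, W)^{+}$, where $H^1(K,W)^+ = H^1(K,W)^{\Gal(K/\bfQ)}$ is exactly the $+$-eigenspace. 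To get the $-$-eigenspace statement, I would apply the same argument to the twisted module $W \otimes \chi_{K/\bfQ}$: restricting to $G_K$ kills the character $\chi_{K/\bfQ}$ (as $\chi_{K/\bfQ}|_{G_K}$ is trivial), so $H^1(K, W \otimes \chi_{K/\bfQ}) = H^1(K,W)$ as groups, but the residual $\Gal(K/\bfQ)$-action is twisted by the sign $\chi_{K/\bfQ}(c) = -1$, so that $H^1(\bfQ, W \otimes \chi_{K/\bfQ}) \xrightarrow{\sim} H^1(K,W)^{-}$.

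Finally I would check that restriction carries the global Selmer condition to the global Selmer condition on each side, i.e. that the isomorphisms above restrict to isomorphisms $H^1_{\Sigma^+}(\bfQ, W) \xrightarrow{\sim} H^1_\Sigma(K,W)^+$ and $H^1_{\Sigma^+}(\bfQ, W \otimes \chi_{K/\bfQ}) \xrightarrow{\sim} H^1_\Sigma(K,W)^-$. In one direction this is immediate: if a class over $\bfQ$ satisfies the local conditions at all $v^+ \notin \Sigma^+$, then its restriction satisfies them at all $v \mid v^+$, since $H^1_f(\bfQ_{v^+}, -)$ maps into $H^1_f(K_v, -)$ under restriction (the defining kernels are compatible — inertia, $B_{\rm cris}$, Fontaine-Lafaille all behave well under base change to the completion $K_v/\bfQ_{v^+}$). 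For the converse I would argue that if $\res(x)$ is unramified (resp. crystalline) at every $v \mid v^+$, then since $[K_v : \bfQ_{v^+}] \mid 2$ is prime to $q$, the restriction $H^1(\bfQ_{v^+}, W) \to H^1(K_v, W)$ is injective on the relevant pieces (a transfer/corestriction argument: $\cores \circ \res = [K_v:\bfQ_{v^+}]$ is an isomorphism on these $\Oo_{\fq}$-modules), forcing $x$ itself to be unramified (resp. crystalline) at $v^+$; the same works for the finite-level modules $W_n$ using the Fontaine-Lafaille description. I expect the main obstacle to be precisely this last point — verifying that the local conditions descend correctly under the ramified-at-worst-$2$ extension $K_v/\bfQ_{v^+}$, particularly at $v \mid q$ where one must confront the Fontaine-Lafaille definition of $H^1_f(-, W_n)$ head-on and check it is stable under the $c$-action and compatible with restriction; everything else is formal group cohomology with $2$ inverted.
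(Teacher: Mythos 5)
Your argument is correct, but it is packaged differently from the paper's. The paper does not use inflation--restriction at all: it invokes Shapiro's lemma, $H^1(K,M)\cong H^1(\bfQ,\mathrm{ind}_K^{\bfQ}M)$, together with the decomposition $\mathrm{ind}_K^{\bfQ}(W|_{G_K})\cong W\oplus W\otimes\chi_{K/\bfQ}$, citing Skinner--Urban for the compatibility of the Shapiro isomorphism with localization at finite places and using the projection formula $\mathrm{ind}_{K_v}^{\bfQ_q}(V\otimes B_{\mathrm{cris}})\cong \mathrm{ind}_{K_v}^{\bfQ_q}(V)\otimes B_{\mathrm{cris}}$ to match the crystalline conditions; in that packaging both directions of the local comparison come at once, since the local condition for the induced module at $v^+$ is identified with the product of the conditions at the places of $K$ above $v^+$, and no corestriction argument is needed. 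Your route via the idempotents $\tfrac12(1\pm c)$ and inflation--restriction (with the outer terms killed because $2$ is invertible on $\Oo_{\fq}$-modules) proves the same global identification more self-containedly, but shifts the work to the local converse: you must show that a class over $\bfQ_{v^+}$ whose restriction lands in $H^1_f(K_v,W)$ already lies in $H^1_f(\bfQ_{v^+},W)$. Your transfer argument does this correctly once one adds the (easy but necessary) check that corestriction preserves the local conditions: at $v\nmid q$ this follows from the double coset formula applied to inertia, and at $v\mid q$ it is immediate from functoriality of corestriction with respect to the $G_{\bfQ_q}$-map $V\to V\otimes B_{\mathrm{cris}}$, after which $x=\tfrac1d\,\mathrm{cores}(\mathrm{res}\,x)$ with $d\in\{1,2\}$ invertible gives the claim. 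One small simplification: since the lemma concerns $W$ (not $W_n$), the local condition at $v\mid q$ is by definition the image of $H^1_f(\cdot,V)$, so the Fontaine--Laffaille finite-level conditions you flag as the main obstacle do not actually enter here. Both proofs use $q>2$ in the same two places (eigenspace decomposition and degree-$2$ invertibility).
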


\begin{proof}
For this we note that ${\rm ind}_K^{\bfQ}(V|_{G_K})=V \oplus V \otimes \chi_{K/\bfQ}$  and recall Shapiro's lemma which for a finite Galois extension of fields $E/F$ tells us that $$H^1(E,M)\cong H^1(F,{\rm ind}_E^F(M))$$ for  any continuous $p$-adic $G_F$-representation $M$ (see \cite{SkinnerUrban14} 3.1.1 and 3.1.2 for a detailed discussion of Shapiro's lemma in the case that $M$ is a discrete module and the compatibility with restrictions at finite places and \cite{SkinnerUrban14} Lemma 3.1 for the analogous statement for the Greenberg Selmer groups in the case of ordinary representations). The statement for  Bloch-Kato Selmer groups follows easily from this and the projection formula $${\rm ind}_{K_v}^{\bfQ_q} (V \otimes B_{\rm cris}|_{G_{K_v}}) \cong {\rm ind}_{K_v}^{\bfQ_q} (V) \otimes B_{\rm cris}.$$ 
\end{proof}

\section{Bloch-Kato Conjecture} \label{s4}
In this section we state the Bloch-Kato conjecture for the Asai $L$-value $L(1,\pi,r^{\pm})$ following the exposition in \cite{Dummigan14}.

Let $\pi$ be a unitary irreducible cuspidal automorphic representation of ${\rm GL}_2(\bfA_K)$ of weight $k$, i.e. with archimedean Langlands parameter $$\begin{pmatrix} (z/|z|)^{k-1} & 0\\0&(|z|/z)^{k-1} \end{pmatrix}.$$ If the central character factors through the norm map from $K$ to $\bfQ$ then for every prime $q$ \cite{HST}, \cite{Taylor94}, \cite{BergerHarcos07} associate to $\pi$ an irreducible Galois representation $\rho_{\pi}:G_K \to {\rm GL}_2(E_{\fq})$ for $E$ a (sufficiently large) finite extension of $\bfQ$ and $\fq \mid q$ (the condition on the central character has been removed by \cite{HLTT} and \cite{Scholze15}).

To ease notation we assume that $\pi$ has trivial central character, which implies in particular, that $\det \rho_{\pi}=\epsilon^{k-1}$ for $\epsilon$ the $q$-adic cyclotomic character (note that we choose the arithmetic Frobenius normalisation). 

Let ${\rm As}^{(-1)^k}(\rho_{\pi}):G_{\bfQ} \to {\rm GL}_2(E_{\fq})$ be the Asai plus/minus representation defined in section \ref{Asaidefinition}.
By \cite{Clozel90}  Conjecture 4.5 (applied to the functorial Asai transfer ${\rm As}^{\pm}(\pi)$ to ${\rm GL}_4(\bfA)$) there should exists a motive $\mM(\pi, r^{(-1)^k})$ of rank 4 with coefficients in $E$ such that its $\fq$-adic realisation is ${\rm As}^{(-1)^k}(\rho_{\pi})^{\vee}$ (dual due to the arithmetic Frobenius normalisation).

\begin{lemma} \label{polarization}
${\rm As}^{\pm}(\rho_{\pi})$ is polarized in the sense that ${\rm As}^{\pm}(\rho_{\pi})^{\vee}(2k-2) \cong {\rm As}^{\pm}(\rho_{\pi})$ or
$${\rm As}^{\pm}(\rho_{\pi}) \otimes \epsilon^{1-k} \cong ({\rm As}^{\pm}(\rho_{\pi}) \otimes \epsilon^{1-k})^{\vee}=\mM(\pi,r^{\pm})(k-1).$$
\end{lemma}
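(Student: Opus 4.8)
The plan is to verify the polarization $\mathrm{As}^{\pm}(\rho_\pi)^{\vee}(2k-2) \cong \mathrm{As}^{\pm}(\rho_\pi)$ directly from the definitions, using the known polarization of $\rho_\pi$ itself together with the multiplicativity properties of tensor induction in Lemma \ref{lemPras}. The starting point is the hypothesis that $\pi$ has trivial central character, which was recorded to imply $\det\rho_\pi = \epsilon^{k-1}$, hence the self-duality $\rho_\pi^{\vee} \cong \rho_\pi \otimes \epsilon^{1-k}$ as $G_K$-representations (a two-dimensional representation is always isomorphic to its dual twisted by the inverse determinant). Equivalently $\rho_\pi^{\vee}(1-k)\cong \rho_\pi$, so $V^{\vee}\otimes\epsilon^{k-1}\cong V$ where $V$ is the space of $\rho_\pi$.

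First I would apply part (2) of Lemma \ref{lemPras}, which gives $\mathrm{As}^{\pm}(\rho_\pi^{\vee}) \cong \mathrm{As}^{\pm}(\rho_\pi)^{\vee}$. Next, writing $\rho_\pi^{\vee} \cong \rho_\pi \otimes \epsilon^{1-k}$ and applying part (1) of Lemma \ref{lemPras} (multiplicativity of $\mathrm{As}^+$ under tensor products, applied with $\rho_1 = \rho_\pi$ and $\rho_2 = \epsilon^{1-k}$ viewed as a character of $G_K$), I get
\[
\mathrm{As}^{+}(\rho_\pi)^{\vee} \cong \mathrm{As}^{+}(\rho_\pi \otimes \epsilon^{1-k}) \cong \mathrm{As}^{+}(\rho_\pi)\otimes \mathrm{As}^{+}(\epsilon^{1-k}).
\]
By part (3) of Lemma \ref{lemPras}, $\mathrm{As}^{+}(\epsilon^{1-k})$ is the character of $G_{\bfQ}^{\mathrm{ab}}$ obtained by composing the character $\epsilon^{1-k}$ of $G_K^{\mathrm{ab}}$ with the transfer (Verlagerung) map $G_{\bfQ}^{\mathrm{ab}} \to G_K^{\mathrm{ab}}$. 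The key computation is that the transfer composed with the restriction of the cyclotomic character $\epsilon$ (which is itself defined over $G_{\bfQ}$) equals $\epsilon|_{G_{\bfQ}}^{2}$: this is the standard fact that for a character $\chi$ of $G_{\bfQ}$ restricted to $G_K$, $\mathrm{As}^{+}(\chi|_{G_K}) = \chi^{2}$ (since $\mathrm{tr}\circ\mathrm{res} = $ multiplication by the degree $[K:\bfQ]=2$ on characters factoring through $G_{\bfQ}$). Hence $\mathrm{As}^{+}(\epsilon^{1-k}) = \epsilon^{2(1-k)} = \epsilon^{2-2k}$, giving $\mathrm{As}^{+}(\rho_\pi)^{\vee} \cong \mathrm{As}^{+}(\rho_\pi)\otimes\epsilon^{2-2k}$, i.e. $\mathrm{As}^{+}(\rho_\pi)^{\vee}(2k-2)\cong\mathrm{As}^{+}(\rho_\pi)$. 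The minus case follows either by the same argument keeping track of signs, or more cheaply from the relation $\mathrm{As}^{-}(\rho_\pi)\cong\mathrm{As}^{+}(\rho_\pi)\otimes\chi_{K/\bfQ}$ together with $\chi_{K/\bfQ}^{\vee}=\chi_{K/\bfQ}$ (as $\chi_{K/\bfQ}$ is quadratic), which shows twisting by $\chi_{K/\bfQ}$ commutes with the polarization. The final displayed equivalence in the lemma is then just a reformulation: twisting both sides of $\mathrm{As}^{\pm}(\rho_\pi)^{\vee}(2k-2)\cong\mathrm{As}^{\pm}(\rho_\pi)$ by $\epsilon^{1-k}$ and using $(\mathrm{As}^{\pm}(\rho_\pi)\otimes\epsilon^{1-k})^{\vee} = \mathrm{As}^{\pm}(\rho_\pi)^{\vee}\otimes\epsilon^{k-1}$, while the identification with $\mM(\pi,r^{\pm})(k-1)$ comes from the definition of the motive $\mM(\pi,r^{(-1)^k})$ as the one whose $\fq$-adic realization is $\mathrm{As}^{(-1)^k}(\rho_\pi)^{\vee}$, recalled just before the lemma.

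The only real subtlety — and the step I would be most careful about — is the computation of $\mathrm{As}^{+}(\epsilon^{1-k})$ via the transfer map, i.e. confirming the normalization so that a power of the cyclotomic character pulls back to its square rather than to an unexpected twist; one should check this against the explicit formula $\mathrm{As}^{+}(\rho)(\tilde c)(x\otimes y) = y\otimes \tilde c^2 x$ from Section \ref{Asaidefinition} in the one-dimensional case, where it says $\mathrm{As}^{+}(\chi)(\tilde c) = \chi(\tilde c^2)$ and $\mathrm{As}^{+}(\chi)(g) = \chi(g)\chi^c(g) = \chi(g)\chi(\tilde c g\tilde c^{-1})$ for $g\in G_K$ — for $\chi$ the restriction of a $G_{\bfQ}$-character this indeed collapses to $\chi^2$. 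Everything else is formal manipulation with Lemma \ref{lemPras}, so no genuine obstacle is expected.
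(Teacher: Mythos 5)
Your proposal follows essentially the same route as the paper's proof: apply Lemma \ref{lemPras}(2), rewrite $\rho_\pi^{\vee}\cong\rho_\pi\otimes\epsilon^{1-k}$ using $\det\rho_\pi=\epsilon^{k-1}$, invoke multiplicativity from Lemma \ref{lemPras}(1), and compute via Lemma \ref{lemPras}(3) that the transfer of $\epsilon|_{G_K}$ to $G_{\bfQ}$ is $\epsilon^{2}$. Your extra care with the normalization of the transfer and the reduction of the minus case to the plus case via twisting by $\chi_{K/\bfQ}$ are sound and only make explicit what the paper leaves implicit.
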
 

\begin{proof}
We use the properties of tensor induction stated in Lemma \ref{lemPras}: Put $\rho={\rho_{\pi}}$. Then $ {\rm As}^{\pm}(\rho)^{\vee} \cong {\rm As}^{\pm}(\rho^{\vee})$ and $ {\rm As}^{\pm}(\rho^{\vee})= {\rm As}^{\pm}(\rho \otimes \epsilon^{1-k}|_{G_K})= {\rm As}^{\pm}(\rho) \otimes  {\rm As}^{+}(\epsilon^{1-k}|_{G_K})$. Now note that the transfer of $\epsilon|_{G_K}$ to $G_{\bfQ}$ is $\epsilon^2$.
\end{proof}

We will consider the Tate twist $\mM(\pi,r^{(-1)^k})(k)$ (corresponding to the dual of the Galois representation ${\rm As}^{(-1)^k}(\rho_{\pi}) \otimes \epsilon^{-k}$).
Let $H_{\rm B}(\mM(\pi,r^{\pm})(k))$ and $H_{\rm dR}(\mM(\pi,r^{\pm})(k))$ be the Betti and de Rahm realisations, and let $H_{\rm B}(\mM(\pi,r^{\pm})(k))^{\pm}$ be the eigenspaces for the complex conjugation $F_{\infty}$. Following Deligne we call $\mM(\pi,r^{\pm})(k)$ critical if ${\rm dim}(H_{\rm B}(\mM(\pi,r^{\pm})(k))^+={\rm dim}(H_{\rm dR}(\mM(\pi,r^{\pm})(k))/{\rm Fil}^0)$.
\cite{ghate99} Section 4.2 checked that $\mM(\pi,r^{(-1)^k})(k)$ is critical (our weight $k$ corresponds in his notation to $n=k-2$ and $v_i=v_c=0$).

Assume $q>2k$ and that $\pi$  (here viewed really as representation of $({\rm R}_{K/\bfQ} {\rm GL}_2)(\bfA)$) is not ramified at $q$. Let $\Oo_{\fq}$ be the ring of integers of $E_{\fq}$, and $\Oo_{(\fq)}$ the localisation at $\fq$ of the ring of integers $\Oo_{E}$ of $E$. Write $\varpi$ for a uniformizer of $\Oo_{\fq}$. Choose an $\Oo_{(\fq)}$-lattice $T_B^{\pm}$ in $H_{\rm B}(\mM(\pi,r^{\pm}))$ in such a way that $T_{\fq}^{\pm}:=T_B^{\pm} \otimes \Oo_{\fq}$ is a $G_{\bfQ}$-invariant lattice in the $\fq$-adic realisation. Then choose an  $\Oo_{(\fq)}$-lattice $T_{\rm dR}^{\pm}$ in $H_{\rm dR}(\mM(\pi,r^{\pm}))$ in such a way that $\mathbb{V}(T_{\rm dR}^{\pm} \otimes \Oo_{\fq})=T_{\fq}^{\pm}$ as $G_{\bfQ_p}$-representations, where $\mathbb{V}$ is the version of the Fontaine-Lafaille functor used in \cite{DiamondFlachGuo04}.

Let $\Omega$ be a Deligne period scaled according to the above choice, i.e. the determinant of the isomorphism
$$H_{\rm B}(\mM(\pi,r^{\pm})(k))^+ \otimes \bfC \cong (H_{\rm dR}(\mM(\pi,r^{\pm})(k))/{\rm Fil}^0) \otimes \bfC,$$ calculated with respect to the bases of $(T_B^{\pm})^+$ and $T_{\rm dR}^{\pm}/{\rm Fil^1}$, so well-defined up to  $\Oo_{(\fq)}^*$.

\begin{conj}[\cite{BlochKato90}, \cite{Dummigan14} Conjecture 4.1] \label{BlochKato}
If $\Sigma^+$ is a finite set of primes, containing all $p$ where $\pi_p$ or $K/\bfQ$ is ramified, but not containing $q$ 
then
$${\rm ord}_{\fq} \left( \frac{L^{\Sigma^+}(1,\pi,r^{(-1)^k})}{\Omega}\right)={\rm ord}_{\fq} \left( \frac{\# H^1_{\Sigma^+}(\bfQ,T_{\fq}^{(-1)^k}(k)^*(1) \otimes (E_{\fq}/\Oo_{\fq}))}{\#H^0(\bfQ,T_{\fq}^{(-1)^k}(k)^*(1)\otimes (E_{\fq}/\Oo_{\fq}))}\right),$$ where $T^*_{\fq}={\rm Hom}_{\Oo_{\fq}}(T_{\fq},\Oo_{\fq})$, with the dual action of $G_{\bfQ}$, and $\#$ denotes a Fitting ideal. 
\end{conj}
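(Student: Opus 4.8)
The plan is to establish Conjecture~\ref{BlochKato} by proving the two divisibilities ``$\leq$'' and ``$\geq$'' between the $\fq$-adic valuations separately, in the style that has been successful for the Rankin--Selberg and adjoint cases.

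For the inequality $\ord_{\fq}\!\big(L^{\Sigma^+}(1,\pi,r^{(-1)^k})/\Omega\big)\leq \ord_{\fq}\big(\#H^1_{\Sigma^+}(\bfQ,T^{(-1)^k}_{\fq}(k)^*(1)\otimes E_{\fq}/\Oo_{\fq})\big)$ --- i.e.\ a lower bound on the Selmer module --- the machinery assembled in this paper is designed to apply. By the Rallis inner product formula the period ratio $L(1,\mathrm{As}^{(-1)^k}(\pi))/\langle\theta,\theta\rangle$ governs the congruence between the theta lift to $\mathrm{GSp}_4(\bfA)$ of (an extension of) $\sigma=\pi\boxplus\chi_{K/\bfQ}^k$ and a \emph{stable} Siegel cuspform $\Pi$ with $\rho_\Pi\equiv\mathrm{ind}_K^{\bfQ}(\rho_\pi)\bmod{\fq^n}$, where $n$ is the valuation in question; Proposition~\ref{nonthetairred} records precisely what is needed of $\Pi$, and the pullback-formula method (\cite{Katsurada08}, \cite{BDSP}, \cite{AgarwalKlosin}) together with the integrality results of \cite{Berger14} is the proposed route to producing it. Granting such a $\Pi$ and the associated $\widetilde R$ with $\widetilde R|_{G_K}$ absolutely irreducible, Theorem~\ref{main} --- via Lemma~\ref{6.4} (which forces $\mathrm{sign}(\widetilde R|_{G_K})=1$), Lemma~\ref{explicit} (identifying complex conjugation with $(-1)^k$ times the polarization involution), and Lemma~\ref{Selmerres} --- produces a submodule of $H^1_f(\bfQ,\mathrm{As}^{(-1)^k}(\rho_\pi)(1-k)\otimes E_{\fq}/\Oo_{\fq})$ of length $\geq n$. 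It then remains to reconcile this Selmer group with the one in Conjecture~\ref{BlochKato}: under the running hypothesis $q>2k$ the representation lies in the Fontaine--Lafaille range, so the crystalline local condition at $q$ is the correct one; the ramified conditions at the finite places in $\Sigma^+$ and the (conjecturally trivial) $H^0$-factor must be matched on the nose, which is routine bookkeeping of the kind in \cite{DiamondFlachGuo04}, \cite{BergerKlosin13}.

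For the reverse inequality $\ord_{\fq}\big(\#H^1_{\Sigma^+}(\bfQ,\ldots)\big)\leq \ord_{\fq}\!\big(L^{\Sigma^+}(1,\pi,r^{(-1)^k})/\Omega\big)$ --- an \emph{upper} bound on the Selmer module --- nothing in this paper applies, and this is where the real work lies. The natural approach is an Euler system for $\mathrm{As}^{(-1)^k}(\rho_\pi)$, or equivalently the relevant case of the Iwasawa main conjecture for the Rankin--Selberg pair $(\pi,\pi^c)$ over $K$ pushed into the Asai motive, followed by a control/descent argument specializing the Iwasawa-theoretic bound to the near-central point $s=1$. One would hope to build the requisite cohomology classes from Beilinson--Flach or diagonal-cycle elements attached to $\mathrm{GL}_2/K\times\mathrm{GL}_2/K$, prove an explicit reciprocity law relating them to the Asai $L$-value at $s=1$, and then run a Kolyvagin/Kato-type argument. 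Alternatively, when $\Pi$ is known to be automorphic and an $R=T$ theorem is available on the relevant $\mathrm{GSp}_4$ Hecke algebra, one could read off $\#H^1_f$ as a congruence module and invoke a freeness (``no extra congruences'') statement to pin down the exact order.

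The hard part will be this upper-bound direction: no Euler system or main conjecture is currently known for the Asai representation of a Bianchi modular form at the near-central point, so the full equality of Conjecture~\ref{BlochKato} is out of reach of present technology. Even the lower-bound direction is here conditional --- it depends on the existence and modularity of the stable Siegel cuspform $\Pi$ with the predicted congruence, on the expected link between $\fq$-divisibility of the Petersson-norm period ratio and such congruences, and on the existence of $\widetilde R$ with $\widetilde R|_{G_K}$ absolutely irreducible. What this paper does contribute, and what I would isolate as its self-contained core, is an \emph{unconditional} proof of the Galois-theoretic step inside the lower-bound half: Theorem~\ref{main} shows that whenever such a congruence exists, the resulting $G_K$-extension class lands in the Bloch--Kato Selmer group of exactly the Asai representation of critical parity, thereby reducing that half of Conjecture~\ref{BlochKato} to the automorphic congruence problem.
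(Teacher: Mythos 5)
The statement you were asked about is stated in the paper as a \emph{conjecture} (quoted from Bloch--Kato and Dummigan), and the paper contains no proof of it, nor claims one: the author only proves, conditionally on the existence of a suitable $\tilde R$, the Galois-theoretic step (Theorem \ref{main}) of one divisibility, and sketches (sections \ref{s7}--\ref{s8}) the automorphic congruence programme that would feed into it. Your write-up is therefore not a proof either, and to your credit it says so; as a description of the intended strategy it matches the paper's own narrative closely (Rallis inner product controlling congruences of the theta lift of $\pi\boxplus\chi_{K/\bfQ}^k$ to a stable Siegel form, Proposition \ref{nonthetairred} supplying $\tilde R$, Theorem \ref{main} plus Lemmas \ref{explicit}, \ref{6.4}, \ref{Selmerres} placing the extension in the Selmer group of the critical parity), and you correctly identify the upper-bound direction as lying entirely outside the paper.

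Two points where your account overstates what even the conditional half would deliver. First, Theorem \ref{main} only yields $q\mid \#H^1_{\Sigma^+}$, i.e.\ one nontrivial $\fq$-torsion class: it does not produce a submodule of length $\geq n$ where $n=\ord_{\fq}(L^{\Sigma^+}(1,\pi,r^{(-1)^k})/\Omega)$, as you assert. To get the full lower bound demanded by the conjectural equality one would need a quantitative refinement (congruences modulo $\fq^n$, lattice/Fitting-ideal arguments in the style of Urban's lemma over $\Oo_{\fq}/\fq^n$, and matching of the ramified local conditions and the $H^0$-term), none of which is carried out in the paper; calling this ``routine bookkeeping'' hides a genuine gap. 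Second, every automorphic input in your lower-bound chain --- the existence of the stable cuspform $\Pi$ congruent to the theta lift, the link between $\fq$-divisibility of the period ratio and such congruences, and the absolute irreducibility of $\tilde R|_{G_K}$ (which in Proposition \ref{nonthetairred} itself rests on an unproven ``no non-trivial deformations'' hypothesis, i.e.\ an $R=T$-type input) --- is open, so even the ``$\leq$'' inequality is not established here. Your assessment that the upper bound would require an Euler system or main conjecture not currently available for the Asai representation of a Bianchi form is accurate, and consistent with the fact that the paper leaves the statement as a conjecture.
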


\begin{rem}
\begin{enumerate}
\item The Langlands $L$-function $L(s,\pi, r^{\pm})$ is expected to have a meromorphic continuation and satisfy a standard functional equation relating $s$ and $1-s$ (and for $L^{\Sigma^+}(s, \pi, r^{\pm})$ this should follow from work of Shahidi). Since ${\rm As}^{\pm}(\rho_{\pi})$ is (conjecturally) pure of weight $2(k-1)$ we have $$L(s, \pi, r^{\pm})=L(s+(k-1),\mM(\pi, r^{\pm})).$$ 

 The value $L(1,\pi,r^{(-1)^k})$ on the LHS in the  conjecture corresponds therefore to the motivic $L$-value $L(k,\mM(\pi, r^{\pm}))=L(0,\mM(\pi, r^{\pm})(k))$

\item By Lemma \ref{polarization} we know that $T_{\fq}(k)^*(1)=T_{\fq}^*(1-k)\cong T_{\fq}(k-1)$, so that the Galois action is given by the self-dual representation ${\rm As}^{(-1)^k}(\rho_{\pi})(1-k)$. We will write the Selmer group in the numerator of the right hand side as  $H^1_{\Sigma^+}(\bfQ, {\rm As}^{(-1)^k}(\rho_{\pi,\fq})(1-k) \otimes (E_{\fq}/\Oo_{\fq}))$ in the following.
\end{enumerate}
\end{rem}

\section{Construction of elements in Selmer group} \label{s5}

This section contains a series of lemmas which together prove the main result Theorem \ref{main} which establishes the Galois part of the proof of one direction of the Bloch-Kato conjecture for the Asai representation.

\begin{thm} \label{main}
Let $q>2$. Assume  that the mod $\fq$-reduction $\ov{\rho}_{\pi}$ is irreducible and does not descend to $G_{\bfQ}$.  
Assume there exists a Galois representation $\tilde R:G_{\bfQ} \to {\rm GL}_4(E_{\fq})$  with the following properties:
\begin{itemize}
\item $\tilde R \equiv {\rm ind}_K^{\bfQ}(\rho_{\pi}) \mod{\fq}$
\item $(\tilde R)^{\vee} \cong \tilde R \otimes \Psi$ for $\Psi:G_{\bfQ} \to E_{\fq}^*$ with $\Psi^{-1} \equiv \chi_{K/\bfQ}^k \epsilon_q^{k-1} \mod{\fq}$ (so in particular $\Psi(c)=-1$),
\item $\tilde R|_{G_K}$ is absolutely irreducible
and is unramified away from a finite set of places $\Sigma$ and short crystalline (for definition see \cite{DiamondFlachGuo04} Section 1.1.2) at $v \mid q$.
\end{itemize}
 
If $\Sigma^+$ is the set of places of $\bfQ$ lying below $\Sigma$  then $$q \mid \# H^1_{\Sigma^+}(\bfQ, {\rm As}^{(-1)^k}(\rho_{\pi,\fq})(1-k) \otimes (E_{\fq}/\Oo_{\fq})).$$ 
 \end{thm}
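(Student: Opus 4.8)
The plan is to run Ribet's lattice construction for the four-dimensional representation $\tilde R$ and then descend the resulting $G_K$-extension to $G_{\bfQ}$ using the $\pm$-eigenspace decomposition of Lemma \ref{Selmerres}. First I would observe that since $\ov{\rho}_\pi$ is irreducible but does not descend to $G_{\bfQ}$, the semisimplification $\ov{\tilde R}^{\rm ss} = \ov{{\rm ind}_K^{\bfQ}(\rho_\pi)}^{\rm ss} = \ov{\rho}_\pi^{\rm ind}$ is a sum of two non-isomorphic $G_{\bfQ}$-irreducible pieces, namely $\ov{{\rm ind}_K^{\bfQ}(\rho_\pi)}$ itself if $\ov\rho_\pi^{\rm ss}$ is irreducible over $G_K$ — but more usefully, over $G_K$ we have $\ov{\tilde R}|_{G_K}^{\rm ss} = \ov\rho_\pi \oplus \ov\rho_\pi^c$, two non-isomorphic irreducible two-dimensional pieces (non-isomorphic precisely because $\pi$ is not a base change, so $\ov\rho_\pi \not\cong \ov\rho_\pi^c$ after possibly enlarging $E$). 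Since $\tilde R|_{G_K}$ is absolutely irreducible, a standard argument (Ribet, Urban, Bellaïche--Chenevier) produces a $G_K$-stable lattice in $\tilde R|_{G_K}$ whose reduction is a non-split extension $0 \to \ov\rho_\pi^c \to * \to \ov\rho_\pi \to 0$ (choosing the ordering of the Jordan--Hölder factors so the extension is in the desired direction), giving a nonzero class in $H^1(K, \Hom(\ov\rho_\pi, \ov\rho_\pi^c)) = H^1(K, \ov\rho_\pi^\vee \otimes \ov\rho_\pi^c)$.

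Next I would promote this mod-$\fq$ class to a class in $H^1(K, (\rho_\pi^\vee \otimes \rho_\pi^c) \otimes E_\fq/\Oo_\fq)$ and check it is unramified outside $\Sigma$ and crystalline at $v \mid q$, i.e. lands in $H^1_{\Sigma}(K, W)$ where $W = (\rho_\pi^\vee \otimes \rho_\pi^c)\otimes E_\fq/\Oo_\fq$; this follows from $\tilde R|_{G_K}$ being unramified outside $\Sigma$ and short crystalline at $v\mid q$, together with the standard fact that subquotients of crystalline representations give classes in the finite part (using Fontaine--Lafaille theory as referenced via \cite{DiamondFlachGuo04, BergerKlosin13}, which is why $q > 2k$ and the unramifiedness at $q$ hypotheses are in force). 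Now $\rho_\pi^\vee \otimes \rho_\pi^c \cong (\rho_\pi \otimes \rho_\pi^c) \otimes \epsilon^{1-k} = {\rm As}^+(\rho_\pi)|_{G_K} \otimes \epsilon^{1-k}$, using $\det\rho_\pi = \epsilon^{k-1}$, so our class lives in $H^1_{\Sigma}(K, {\rm As}^+(\rho_\pi)(1-k)\otimes E_\fq/\Oo_\fq)$. By Lemma \ref{Selmerres} this splits as a $c$-eigenspace sum, and the $+$ (resp.\ $-$) eigenspace is identified via restriction with $H^1_{\Sigma^+}(\bfQ, {\rm As}^+(\rho_\pi)(1-k)\otimes E_\fq/\Oo_\fq)$ (resp.\ with the $\chi_{K/\bfQ}$-twist, which is $H^1_{\Sigma^+}(\bfQ, {\rm As}^-(\rho_\pi)(1-k)\otimes E_\fq/\Oo_\fq)$). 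So it remains to identify the eigenspace in which our constructed class sits, and show it is the $(-1)^k$ one — equivalently, show the class is nonzero in that eigenspace (nonvanishing having been arranged in the first step, and the class is automatically an eigenvector once we show the two choices of lattice-extension direction correspond to the two eigenvalues).

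The main obstacle — and the heart of the paper — is pinning down the complex-conjugation eigenvalue. The plan here is the one sketched in the introduction: the $c$-action on $H^1(K, {\rm As}^+(\rho_\pi)(1-k)\otimes E_\fq/\Oo_\fq)$ is computed, via Lemma \ref{explicit}, to be $(-1)^k$ times the involution coming from the polarization $(\tilde R|_{G_K})^{c\vee} \cong \tilde R|_{G_K}\otimes\epsilon^{1-k}$ (this is where $\Psi(\tilde c) = -1$ and $\Psi^{-1}\equiv \chi_{K/\bfQ}^k\epsilon^{k-1}$ enter), and by \cite{BellaicheChenevierbook} Proposition 1.8.10 the eigenvalue of that polarization-involution on the extension class is the Bellaïche--Chenevier sign of $\tilde R|_{G_K}$. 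Lemma \ref{6.4} shows this sign is $+1$: one equips $\Lambda^2({\rm ind}_K^{\bfQ}(\rho_\pi))$ with a $G_{\bfQ}$-invariant symplectic line (available because, $\pi$ having trivial central character, $\Lambda^2$ contains two invariant lines, one even-symplectic), deduces $\tilde R$ is symplectic from its congruence to ${\rm ind}_K^{\bfQ}(\rho_\pi)$ together with the hypothesis that $\Psi$ is congruent to the odd similitude character, and reads off ${\rm sign}(\tilde R|_{G_K}) = 1$. Combining, the polarization-involution acts by $+1$, hence $c$ acts by $(-1)^k$ on the class, so the class lies in $H^1_{\Sigma}(K,W)^{(-1)^k} \cong H^1_{\Sigma^+}(\bfQ, {\rm As}^{(-1)^k}(\rho_\pi)(1-k)\otimes E_\fq/\Oo_\fq)$ and is nonzero, giving $q \mid \#H^1_{\Sigma^+}(\bfQ, {\rm As}^{(-1)^k}(\rho_{\pi,\fq})(1-k)\otimes E_\fq/\Oo_\fq)$. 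The only genuinely delicate points beyond invoking these lemmas are: ensuring the lattice-optimization argument applies despite $\ov{\tilde R}$ possibly being non-semisimple (handled by the Bellaïche--Chenevier pseudo-representation/generically-irreducible-lattice machinery, using absolute irreducibility of $\tilde R|_{G_K}$), and checking that the finite/crystalline conditions are preserved under passing to the subquotient extension, which is exactly the Fontaine--Lafaille bookkeeping of \cite{DiamondFlachGuo04} under $q>2k$.
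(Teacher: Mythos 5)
Your proposal is correct and follows essentially the same route as the paper's proof: the Ribet/Urban lattice construction applied to $\tilde R|_{G_K}$, the identification of the resulting $\Hom$-valued class with a class in $H^1_{\Sigma}(K,{\rm As}^+(\rho_\pi)(1-k)\otimes E_\fq/\Oo_\fq)$, and the determination of the complex-conjugation eigenvalue via Lemma \ref{explicit} together with the Bella{\"{\i}}che--Chenevier sign (Proposition 1.8.10) and the symplectic/odd-similitude argument of Lemmas \ref{6.4} and \ref{sign}, concluding by Lemma \ref{Selmerres}. The only point treated more explicitly in the paper is the injectivity of $H^1_{\Sigma}(K,{\rm As}(\ov\rho)(1-k))\to H^1_{\Sigma}(K,{\rm As}(\rho_{\pi,\fq})(1-k)\otimes E_\fq/\Oo_\fq)$, which uses irreducibility of ${\rm As}(\ov\rho)$ (via the reference to Brown), a fact you implicitly cover by noting $\ov\rho_\pi\not\cong\ov\rho_\pi^c$.
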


\begin{proof}
By assumption $(\ov{\tilde R}|_{G_K})^{\rm ss}\equiv \ov{\rho}_{\pi} \oplus \ov{\rho}_{\pi}^c \mod{\fq}$. By Ribet's lemma (see e.g. Theorem 1.1 of \cite{Urban01}) we know there exists a lattice for $\tilde R|_{G_K}$ such that \begin{equation} \label{lattice} \tilde R|_{G_K}\equiv \begin{pmatrix} \ov{\rho}_{\pi} & * \\0 & \ov{\rho}_{\pi}^c \end{pmatrix} \mod{\fq}\end{equation} and this extension is not split.

We claim now that this gives rise to an element in $$H^1_{\Sigma}(K,{\rm As}(\rho)(1-k)\otimes (E_{\fq}/\Oo_{\fq}))^{(-1)^k},$$ which is isomorphic to $H^1_{\Sigma^+}(\bfQ, {\rm As}^{(-1)^k}(\rho_{\pi,\fq})(1-k) \otimes (E_{\fq}/\Oo_{\fq}))$ by Lemma \ref{Selmerres}.

To ease notation put $\rho:=\ov{\rho}_{\pi}$. First note that
\eqref{lattice} gives a non-trivial class in $$H^1(G_K, \Hom_{\bfF}(\rho^c, \rho)).$$  By the assumptions on the ramification and crystallinity of $\tilde R$ we have, in fact, a class in $H^1_{\Sigma}(G_K, \Hom_{\bfF}(\rho^c, \rho))$. Since $$\Hom_{\bfF}(\rho^c, \rho) \cong (\rho^c)^{{\vee}} \otimes \rho \cong (\rho^c \otimes \epsilon^{1-k}) \otimes \rho \cong{\rm As}(\rho)(1-k)|_{G_K}$$
 we obtain an element in $$H^1_{\Sigma}(G_K, {\rm As}(\rho)(1-k)) \cong H^1_{\Sigma}(K,{\rm As}(\rho)(1-k)\otimes (E_{\fq}/\Oo_{\fq})[\fq]),$$ which injects into $H^1_{\Sigma}(K,{\rm As}({\rho}_{\pi,\fq})(1-k) \otimes  (E_{\fq}/\Oo_{\fq}))$ since $H^0(G_{K^+},{\rm As}(\ov{\rho})(1-k))$=0. To see the latter, assume that $\Hom_{G_{K^+}}(\mathbf{1}, {\rm As}(\ov{\rho})(1-k)) \neq 0$. This implies  $\Hom_{G_{K}}(\mathbf{1}, \ov{\rho} \otimes  \ov{\rho}^c \ov{\epsilon}^{1-k}) \neq 0$, contradicting our assumption that $\ov{\rho}$ and $\ov{\rho}^{c}$ are irreducible and non-isomorphic.

%\com{Or should I change to ${\rm Ext}_{\bfF[G]}(\rho^c, \rho)$ for the following?}

\begin{lemma} \label{explicit}
On $H^1(K, {\rm As}^+(\rho)(1-k))$ the complex conjugation action coincides with $(-1)^k$ times the ``polarization involution" arising from the involution on $\bfF[G_K]$ given by $g \mapsto \tau(g):= cg^{-1}c^{-1} \epsilon^{k-1}(g)$ for $g \in G_K$.
\end{lemma}

\begin{proof}
We recall from \cite{BellaicheChenevierbook} Section 1.8 for how $\tau$ induces an involution on ${\rm Ext}^1_{G_K}(\rho^c, \rho)\cong H^1(K, {\rm As}(\rho)(1-k))$: In our case, $\tau$ is an anti-involution of algebras, and the corresponding involution on representations $R: G_K \to {\rm GL}_n(\bfF)$ 
%\com{or $R: \bfF[G_K] \to M_n(B)$ for any $\bfF$-algebra $B$?} 
is given by $R \mapsto R^{\perp}:= (R \circ \tau)^T$. 
We note that we have \begin{equation} \label{eqnpol} P\rho^{\perp}P^{-1} = \rho^c \end{equation} with $P=\begin{pmatrix} 0 & 1\\-1&0 \end{pmatrix}$. 

\cite{BellaicheChenevierbook} (26) on p.51 explains that the induced involution on $H^1_{\Sigma}(K, \Hom_{\bfF}(\rho^c, \rho))$ can be described as follows: 
Associate to a cocycle $\phi \in Z^1(G_K, \Hom_{\bfF}(\rho^c, \rho))$ the  representation $$\rho_{\phi}: G_K \to \GL_4(\bfF): g \mapsto \begin{pmatrix}\rho(g)&b(g)\\0&\rho^c(g) \end{pmatrix}$$ with $b(g)=\phi(g) \rho^c(g)$. Then $\phi^{\perp}$ given by $$\phi^{\perp}(g):=P b^T(cg^{-1}c^{-1}) \epsilon^{k-1} P^{-1} \rho^c(g)^{-1}$$ defines an involution $\perp$ on  $H^1_{\Sigma}(K, \Hom_{\bfF}(\rho^c, \rho))$. (To express $\phi^{\perp}$ directly in terms of $\phi$ apply (\ref{eqnpol}) to write $\phi^{\perp}(g)=\rho(g) P \phi^T(cg^{-1}c^{-1}) P^{-1} \rho^c(g)^{-1}$.) 

We  can rewrite this as follows: \begin{eqnarray*}\phi^{\perp}(g)&=&P b^T(cg^{-1}c^{-1}) \epsilon^{k-1} P^{-1} \rho^c(g)^{-1}\overset{\eqref{eqnpol}}{=}P b^T(cg^{-1}c^{-1}) \rho^c(g)^T P^{-1} \\&=&P \left(\phi(cg^{-1}c^{-1})\rho(g^{-1})\right)^T \rho^c(g)^T P^{-1}=P \left(\rho^c(g) \phi(cg^{-1}c^{-1})\rho(g^{-1})\right)^T  P^{-1}\\&=&P\left(-\phi(cgc^{-1})\right)^T P^{-1},\end{eqnarray*}
using the cocycle relation for the last equality.

We now compare this to the action of $c \in G_{\bfQ}$ on $[\phi] \in H^1_{\Sigma}(K,{\rm As}^+(\rho)(1-k))$ given by $$(c.\phi)(g)={\rm As}(\rho)(1-k)(c) \phi(cgc^{-1}).$$ Since ${\rm As}(\rho)(c)$ acts as transpose on the upper right shoulder, $$\phi^T \in Z^1(G_K, \Hom_{\bfF}( \rho^{\vee}, (\rho^c)^{\vee}),$$ and $$P  \rho^{\vee} P^{-1}=\rho \det(\rho)^{-1},$$  complex conjugation acts by $\phi \mapsto (g \mapsto (-1)^{k-1}P \phi^T (cgc^{-1})P^{-1})$ on  a cocyle representing a class $[\phi] \in  H^1_{\Sigma}(K, {\rm As}^+(\rho)(1-k))$.

\end{proof}

\begin{lemma} \label{6.4}
For the lattice constructed by Theorem 1.1 of \cite{Urban01} the corresponding extension \eqref{lattice} lies in $H^1(\bfQ, {\rm As}(\rho)^{(-1)^k}(1-k))$.
\end{lemma}
 
\begin{proof}
We need to show that the corresponding extension lies in $H^1(K, {\rm As}(\rho)(1-k))^{(-1)^k}$. In fact, Urban's Theorem 1.1  constructs an element $c_G$ of ${\rm Ext}^1_{\bfF[G_K]}(\rho^c, \rho)$ by considering the linear extension $\widehat{\tilde{R}}$ of $\tilde R|_{G_K}$ to $\Oo_{\fq}[G_K] \to M_4(\Oo_{\fq})$.  Put $T:= \tr(\widehat{\tilde{R}})$.  Since $\widehat{\tilde{R}}$ is absolutely irreducible \cite{BellaicheChenevierbook} Proposition 1.6.4 tells us that $\ker \widehat{\tilde{R}}=\ker T$. The morphism $\widehat{\tilde{R}}$ induces a morphism from $\Oo_{\fq}[G_K]/\ker \widehat{\tilde{R}}$ and so, applying \cite{Urban01} Theorem 1.1 again, we see that $c_G$ lies in the subspace  $${\rm Ext}^1_{\bfF[G_K]/\ker T}(\rho^c, \rho)\subset \Ext^1_{\bfF[G_K]}(\rho^c, \rho).$$

By \cite{BellaicheChenevierbook} Proposition 1.8.10(i) we know that $\perp$ acts by multiplication by Bella{\"{\i}}che-Chenevier's ``$U$-sign" ${\rm sign}(\tilde R|_{G_K})={\rm sign}(\tilde R|_{G_K}, \Psi|_{G_K})$ 
on $\Ext^1_{\bfF[G_K]/\ker T}(\rho^c, \rho)$. The $U$-sign of $\tilde R|_{G_K}$ (with respect to $\Psi|_{G_K}$)  is defined as follows (for details see \cite{BellaicheCheneviersign} or \cite{CalegariPoincare}): By assumption we know that $$(\tilde R|_{G_K})^{\vee}= A (\tilde R|_{G_K})^c A^{-1} \Psi|_{G_K}$$ for a character $\Psi:G_{\bfQ} \to E_{\fq}^*$ and an invertible matrix $A$ which can be shown to be either symmetric or antisymmetric. This is used to define the sign by setting  $$A^T= {\rm sign}(\tilde R|_{G_K})  A \text{ for } {\rm sign}(\tilde R|_{G_K}) \in \{ \pm 1 \}.$$ 

To prove that $ {\rm sign}(\tilde R|_{G_K})=1$ we first note that the identity \begin{equation} \label{Lambda} \Lambda^2({\rm ind}_K^{\bfQ}(\rho_{\pi}))={\rm As}^-(\rho_{\pi}) \oplus \epsilon^{k-1} \oplus \chi_{K/\bfQ} \epsilon^{k-1},\end{equation}
shows that the induced Galois representation can be equipped with an odd symplectic pairing.

The following lemma (applied with $\rho_1={\rm ind}_K^{\bfQ}(\rho_{\pi})$) then proves that $\tilde R$ is also symplectic. Together with the oddness of $\Psi$ this implies by  e.g. \cite{CalegariPoincare} Lemma 2.6 that $A$ is symmetric, so concludes the proof of  Lemma \ref{6.4} and the theorem.

\begin{lemma}\label{sign}
Let $q>2$ and $L$ a finite extension of $\bfQ_q$ with ring of integers $\Oo_L$, uniformizer $\pi_L$ and residue field $\bfF_L$. Let $\rho_1, \rho_2:G_{\bfQ} \to {\rm GL}_4(L)$ be two residually absolutely irreducible representations such that $\rho_i^{\vee} \cong \rho_i \otimes \Psi_i$ with $\Psi_i: G_{\bfQ} \to \Oo_L^*$ such that $$\Psi_1 \equiv \Psi_2 \mod{\varpi_L} \text{ and } \ov{\rho}_1 \cong \ov{\rho}_2 \mod{\varpi_L}.$$ Assume that $\ov{\rho}_1$ preserves a non-degenerate $\bfF_L$-bilinear symplectic form. Then so does $\ov{\rho}_2$ and both $\rho_i$ preserve $L$-bilinear symplectic forms.
\end{lemma}
\begin{proof}
We can assume that both representations  are valued in ${\rm GL}_4(\Oo_L)$. By a result of Serre and Carayol we know that there exist $B_i \in {\rm GL}_4(\Oo_L)$ such that $$\rho_i^{\vee} = B_i \rho_i B_i^{-1} \Psi_i.$$ This gives  pairings $\langle \cdot, \cdot \rangle_i: \Oo_L^4 \times \Oo_L^4 \to \Oo_L^*$ defined by $\langle x, y \rangle_i:=x B_i y^T$. We claim that $B_i^T=-B$, i.e. that the signs of $\rho_i$ (again in the sense of \cite{BellaicheCheneviersign}) are $-1$. This follows from \cite{BellaicheCheneviersign} which prove in Section 2.3 for our case of residually absolutely irreducible representations that the signs of $\rho_i$ and $\ov{\rho}_i$ agree (and that the sign of a representation only depends on its isomorphism class). 

For the convenience of the reader we give a direct proof here: Let $M \in {\rm GL}_4(\bfF_L)$ such that $\ov{\rho}_2=M \ov{\rho}_1 M^{-1}$. Then we get that on the one hand $$\ov{\rho}_2^{\vee}=M^{-T} \ov{\rho}_1^{\vee} M^T=M^{-T} (\ov{B}_1 \ov{\rho}_1 \ov{B}_1^{-1}) M^T \ov{\Psi}_1,$$ and on the other hand
$$\ov{\rho}_2^{\vee}=\ov{B}_2 \ov{\rho}_2 \ov{B}_2^{-1} \ov{\Psi}_2=\ov{B}_2 (M \ov{\rho}_1 M^{-1}) \ov{B}_2^{-1} \ov{\Psi}_2.$$
Since $\Psi_1 \equiv \Psi_2$ and $\ov{\rho}_1$ absolutely irreducible Schur's Lemma implies that $$(M^{-T}\ov{B}_1)^{-1} \ov{B}_2M$$ is a scalar matrix, in particular symmetric. This, together with the assumption that $\ov{B}_1^T=-\ov{B}_1$, implies that $\ov{B}_2^T=-\ov{B}_2$,
\end{proof}

\end{proof}

\end{proof}

\begin{rem} \label{rem_dum}
The proof of Theorem \ref{main} only requires the existence of a suitable polarized $G_K$-representation. This can be given by the restriction of a symplectic $G_{\bfQ}$-representation arising from congruences of Siegel modular forms, as discussed in the next sections, but need not to.  As part of general conjectures extending Harder's Eisenstein congruences \cite{Dummigan14} Section 7  discusses such a situation using Eisenstein series  for the quasi-split unitary group $U(2,2)$. 

For $\rho_{\pi}$ arising from a Bianchi modular form of weight $k$ for which a critical (normalised) $L$-value of ${\rm As}^+(\rho_{\pi})$ is divisible by $\fq$  \cite{Dummigan14} conjectures the existence of a cuspidal automorphic representation for $U(2,2)(\bfA)$ whose  associated Galois representation $R:G_K \to {\rm GL}_{4}(E_{\fq})$ should satisfy the following properties: 
\begin{itemize}
\item $R$ is absolutely irreducible and is unramified away from a finite set of places $\Sigma$ and short crystalline at $v \mid q$,
\item $R^{\vee} \cong R^c \otimes \Psi|_{G_K}$ with $\Psi=\epsilon^{1-k} \chi_{K/K^+}^{k}$ (in particular $\Psi(c)=-1$),
\item ${\rm sign}(R)=1$,
\item For an integer $i$ with $0<i\leq \lfloor \frac{k-1}{2}\rfloor$ we have
$$\ov{R}^{\rm ss}=\ov{\rho}_{\pi}(i) \oplus \ov{\rho}_{\pi}^c(i)^{\vee} \otimes \Psi^{-1}|_{G_K}$$ and  $$\ov{\rho}_{\pi}(i) \not \equiv \ov{\rho}_{\pi}^c(i)^{\vee} \Psi^{-1} \mod{\fq}.$$ 
\end{itemize}
(Complementary to this paper the case of $i=0$ corresponding to the near-central $L$-value is excluded.) In this situation one can also apply the arguments in the proof of Theorem \ref{main} to provide evidence for the Bloch-Kato conjecture for all the critical values for ${\rm As}^+(\rho_{\pi})$. \cite{Dummigan14} Section 7 also allows to consider base change forms, in which case the evidence is for symmetric square $L$-values for elliptic modular forms.
\end{rem}

\section{Automorphic Induction and $L$-packets for ${\rm GSp}_4$} \label{s7}
In the remainder of the paper we explain a strategy to procure $\tilde R$ as in theorem \ref{main} as the Galois representation associated to a cuspidal automorphic representation $\Pi$ for ${\rm GSp}_4(\bfA)$.  For this we first need to discuss the automorphic analogue of ${\rm ind}_K^{\bfQ} \rho_{\pi}$:

We briefly recall the definition of $L$-parameters. 
Let $F$ be a number field, $v$ a place of $F$, and $F_v$ the completion of $F$ at $v$. 
Let $G$ be a connected reductive algebraic group over $F$. Then the 
local Langlands correspondence (which is known for $G=\GL_2$, ${\rm GSp}_{4}$ or ${\rm Sp}_{4}$  see \cite{Knapp, BushnellHenniart, GT10, GT2011})
yields a  finite-to-one surjective map (satisfying additional conditions)
$$L: \Pi(G(F_v)) \to \Phi(G(F_v)),$$
where 
\begin{itemize}
\item $\Pi(G(F_v))$ is the set of isomorphism classes of irreducible admissible representations of $G(F_v)$;
\item $\Phi(G(F_v))$ is the set of $L$-parameters for $G(F_v)$, i.e. the set of isomorphism classes 
of admissible homomorphisms $\phi: W_{F_v}' \to {}^LG$, 
where $W_{F_v}'$ is the Weil-Deligne group of $F_v$ and ${}^LG^={}^LG^0\rtimes W_{F_v}$ the $L$-group of $G$ (if $G$ is split over $F$ we take the projection to ${}^LG^0$).
\end{itemize}
For any irreducible admissible representation $\pi_v$ of $G(F_v)$, we call $L(\pi_v)$ the $L$-parameter of $\pi_v$.  

We have an $L$-group homomorphism
$$I^{\bfQ}_K:{}^L(R_{K/\bfQ} {\rm GL}_2/K)(\bfC)\to {\rm GL}(\bfC^2 \oplus \bfC^2)$$  given by $$I^{\bfQ}_K(g,g',\gamma)(x \oplus y)=g(x) \oplus g'(y) \text{ for } \gamma|_K=1$$ and $$I^{\bfQ}_K(1,1,\gamma)(x \oplus y)= y \oplus x.\text{ for } \gamma|_K\neq1.$$
Let $\pi$  be a cuspidal automorphic representation for ${\rm GL}_2(\bfA_K)$ with trivial central character. By the work of Arthur-Clozel we then have its automorphic induction $I^{\bfQ}_K(\pi)$, which is an automorphic representation of ${\rm GL}_4(\bfQ)$ and which is cuspidal if $\pi$ is not a base change. Under our assumptions we know that \begin{equation} \label{Lambdasquare} L(s, \Lambda^2(I^{\bfQ}_K(\pi)) \otimes \chi^{-1})=\zeta(s) L(s, \chi_{K/\bfQ}) L(s,\pi,  r^- \otimes \chi^{-1})\end{equation} has a pole at $s=1$ for $\chi=1$ or $\chi_{K/\bfQ}$, so $I^{\bfQ}_K(\pi)$ descends to a cuspidal representation of ${\rm GSp}_4(\bfQ)$ with central character $\chi=1$ or $\chi_{K/\bfQ}$ (see 
%Kim-Yamaguchi p.18, 
\cite{CPMok14} Proposition 5.1, cf. also discussion towards end of \cite{Ramak04} (correcting sign in \cite{Ramak02})). This uses the non-vanishing of the Asai $L$-function at $s=1$ proved by \cite{Shahidi81}.

These representations lie in  two global $L$-packets for ${\rm GSp}_4$. Following Roberts \cite{Ro01}) we denote them by
$$\Pi(\chi, \pi)=\{\Pi=\otimes_v \Pi_v \in {\rm Irr}_{\rm admiss}({\rm GSp}_4(\bfA)): \Pi_v \in \Pi(\chi_v, \pi_v) \text{ for all  } v\}.$$ 

As indicated, the corresponding $L$-parameters $\varphi(\chi_v, \pi_v):W'_{\bfQ_v} \to {\rm GSp}_4(\bfC)$ composed with ${\rm spin}: {\rm GSp}_4(\bfC) \hookrightarrow {\rm GL}_4(\bfC) $ agree with the $L$-parameters for $I_K^{\bfQ}(\pi)$ (and are distinguished by their similitude being $\chi=\mathbf{1}$ or $\chi_{K/\bfQ}$).

By section 11 of \cite{GT2011} and \cite{CG15} the non-archimedean local packets $\Pi(\chi_v, \pi_v)$ are  also explicitely described by table 4 of the appendix of \cite{Ro01} (via theta correspondence between ${\rm GO}(V)$ for quadratic spaces $V$ over $\bfQ_v$ of discriminant $K \otimes_{\bfQ} \bfQ_v$ and ${\rm GSp}_4$).

We summarize the images of the $L$-parameters of these $L$-packets under various $L$-group homomorphisms in the following proposition:

\begin{prop} \label{Lparam}
For each (finite) place $v \neq 2$ of $\bfQ$ denote by $\varphi_{\pi_v}: W'_{\bfQ_v} \to {}^L(R_{K/\bfQ} {\rm GL}_2)$ the $L$-parameter corresponding to $\pi$ (viewed as a representation of $R_{K/\bfQ} {\rm GL}_2$). We then have:
\begin{enumerate}
\item ${\rm spin} \circ \varphi(\chi_v, \pi_v)=I_K^{\bfQ} \circ \varphi_{\pi_v}$  for ${\rm spin}: {\rm GSp}_4(\bfC) \hookrightarrow {\rm GL}_4(\bfC)$,
\item ${\rm sim} \circ \varphi(\chi_v, \pi_v)=\chi_v$ for ${\rm sim}: {\rm GSp}_4(\bfC) \to {\rm GL}_1(\bfC)$,
\item ${\rm std} \circ \varphi(\chi_v, \pi_v)=\chi_{K/\bfQ,v} \oplus r^+ \circ \varphi_{\pi_v} \otimes \chi_v^{-1} \chi_{K/\bfQ,v}$ for ${\rm std}: {\rm GSp}_4(\bfC) \to {\rm PGSp}_4(\bfC) \cong {\rm SO}_5(\bfC)$.
\end{enumerate}
\end{prop}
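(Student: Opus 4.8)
The plan is to read off all three statements from the construction of the local $L$-packets $\Pi(\chi_v,\pi_v)$, together with the representation theory of the inclusion ${\rm GSp}_4(\bfC)\subset{\rm GL}_4(\bfC)$ and the wedge-square plethysm for an index-two induction.

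Statements (1) and (2) I would treat as a repackaging of the discussion preceding the proposition. By Roberts \cite{Ro01}, \cite{GT2011} Section 11 and \cite{CG15}, the packet $\Pi(\chi_v,\pi_v)$ is realised by the theta correspondence from ${\rm GO}(V_v)$ — with $V_v$ the quadratic space over $\bfQ_v$ of discriminant $K\otimes_{\bfQ}\bfQ_v$ — applied to an extension of $\pi_v\boxplus\chi_v$, and the compatibility of this correspondence with the local Langlands correspondences for ${\rm GSp}_4$ and ${\rm GSO}(V_v)$ identifies ${\rm spin}\circ\varphi(\chi_v,\pi_v)$ with the local component of the automorphic induction $I_K^{\bfQ}(\pi)$, namely $I_K^{\bfQ}\circ\varphi_{\pi_v}$; this is (1). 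Since ${\rm spin}$ is nothing but the inclusion ${\rm GSp}_4(\bfC)\hookrightarrow{\rm GL}_4(\bfC)$, (1) determines $\varphi(\chi_v,\pi_v)$ up to the choice of alternating form it preserves, and the two packets $\Pi(\mathbf{1},\pi)$, $\Pi(\chi_{K/\bfQ},\pi)$ are by definition attached to the two ${\rm GSp}_4(\bfC)$-structures on $I_K^{\bfQ}\circ\varphi_{\pi_v}$, of similitude $\mathbf{1}$ and of similitude $\chi_{K/\bfQ,v}$ respectively (both exist because $\Lambda^2(I_K^{\bfQ}\circ\varphi_{\pi_v})$ contains two Galois-invariant lines, as the plethysm below shows). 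Hence ${\rm sim}\circ\varphi(\chi_v,\pi_v)=\chi_v$, which is (2).

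The real content is (3). First I would recall the linear-algebra fact that the composite ${\rm GSp}_4(\bfC)\xrightarrow{{\rm std}}{\rm PGSp}_4(\bfC)\cong{\rm SO}_5(\bfC)\hookrightarrow{\rm GL}_5(\bfC)$ is characterised by the ${\rm GSp}_4(\bfC)$-equivariant decomposition
$$\Lambda^2({\rm spin})\ \cong\ ({\rm std}\otimes{\rm sim})\ \oplus\ {\rm sim},$$
the last summand being the line spanned by the defining alternating form. By (1) it then suffices to decompose $\Lambda^2(I_K^{\bfQ}\circ\varphi_{\pi_v})$, and here I would invoke the wedge-square plethysm for an index-two induction — the pointwise content of \eqref{Lambda} and of the $L$-function identity \eqref{Lambdasquare} — which, since $\pi$ has trivial central character, reads
$$\Lambda^2\bigl(I_K^{\bfQ}\circ\varphi_{\pi_v}\bigr)\ \cong\ (r^-\circ\varphi_{\pi_v})\ \oplus\ \mathbf{1}\ \oplus\ \chi_{K/\bfQ,v}.$$
I would then strip off the invariant line attached to the alternating form preserved by $\varphi(\chi_v,\pi_v)$: by (2) its similitude is $\chi_v$, so it is the one-dimensional summand equal to $\chi_v$ (one of $\mathbf{1}$, $\chi_{K/\bfQ,v}$, since $\chi_v\in\{\mathbf{1},\chi_{K/\bfQ,v}\}$), leaving $\chi_v\chi_{K/\bfQ,v}$ as the surviving one-dimensional piece. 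Twisting the remaining five-dimensional summand by ${\rm sim}^{-1}=\chi_v^{-1}$ gives
$${\rm std}\circ\varphi(\chi_v,\pi_v)\ \cong\ (r^-\circ\varphi_{\pi_v})\otimes\chi_v^{-1}\ \oplus\ \chi_{K/\bfQ,v},$$
and rewriting $r^-\circ\varphi_{\pi_v}=(r^+\circ\varphi_{\pi_v})\otimes\chi_{K/\bfQ,v}$ (i.e. ${\rm As}^-={\rm As}^+\otimes\chi_{K/\bfQ}$) turns this into $\chi_{K/\bfQ,v}\oplus(r^+\circ\varphi_{\pi_v})\otimes\chi_v^{-1}\chi_{K/\bfQ,v}$, the asserted formula.

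The main obstacle is (1): its proof is not elementary, resting on the local Langlands correspondence for ${\rm GSp}_4$ (Gan--Takeda) and on its established compatibility with the ${\rm GO}(V)$--${\rm GSp}_4$ theta correspondence — which is also why the statement is confined to finite places $v\neq 2$. A secondary point needing care is $v$ split in $K$, where $\chi_{K/\bfQ,v}=\mathbf{1}$, the two one-dimensional summands of $\Lambda^2(I_K^{\bfQ}\circ\varphi_{\pi_v})$ coincide, and the stripping-off step must be argued differently; there I would work directly with $K\otimes_{\bfQ}\bfQ_v\cong\bfQ_v\times\bfQ_v$, $\pi_v\cong\pi_{v,1}\boxplus\pi_{v,2}$, $I_K^{\bfQ}\circ\varphi_{\pi_v}\cong\varphi_{\pi_{v,1}}\oplus\varphi_{\pi_{v,2}}$ and $r^{\pm}\circ\varphi_{\pi_v}\cong\varphi_{\pi_{v,1}}\otimes\varphi_{\pi_{v,2}}$, where every term is explicit. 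Granting (1), statements (2) and (3) are formal consequences of plethysm and the structure of ${\rm GSp}_4\subset{\rm GL}_4$, so no further difficulty arises.
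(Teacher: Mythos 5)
Your proposal is correct and follows essentially the same route as the paper: (1) and (2) are read off from the construction of the packets $\Pi(\chi_v,\pi_v)$ as descents of $I_K^{\bfQ}(\pi)$, and (3) is deduced by combining the identity $\Lambda^2({\rm spin}\circ\phi)\otimes{\rm sim}(\phi)^{-1}={\rm std}\circ\phi\oplus\bfC$ of \cite{GT10} with the wedge-square plethysm of the induced parameter (the local analogue of \eqref{Lambda}), exactly as in the paper's ``direct'' computation; the cancellation step is unproblematic even at split $v$ by Krull--Schmidt, so your extra caution there is not needed. The paper merely adds two alternative verifications (via Satake parameters at unramified places, and via the ${\rm O}(V_K)$--${\rm Sp}_4$ theta correspondence following \cite{GI14}, \cite{GI15}) that you do not need.
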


\begin{proof}
(1) and (2) are clear from the discussion above.

For unramified places (3) can be shown on the level of Satake parameters,
similar to the proof of \cite{Brown07} Theorem 3.10 using  Lemma 7 of \cite{HST} (but noting the correction given on p.288 of \cite{Ro01}). 

To compute the $L$-parameters directly we can use the fact that for $\phi:W_{\bfQ_v} \to {\rm GSp}_4(\bfC)$ we have $$\Lambda^2 \phi \otimes {\rm sim}(\phi)^{-1}={\rm std} \circ \phi \oplus \bfC$$ (see \cite{GT10} (5.5)).

This allows us to deduce ${\rm std} \circ \varphi(\chi_v, \pi_v)|_{W_{\bfQ_v}}$ from the analogue of (\ref{Lambda}) 
$$\Lambda^2(\varphi(\chi_v, \pi_v)|_{W_{\bfQ_v}}) \otimes \chi_v^{-1}=\bfC \oplus \chi_{K/\bfQ,v} \oplus r^+ \circ \varphi_{\pi_v}|_{W_{\bfQ_v}} \otimes \chi_v^{-1} \chi_{K/\bfQ,v}$$  (which of course also implies (\ref{Lambdasquare})). 

An alternative proof (which also explains better the occurrence of the twist by $\chi_{K/\bfQ}$) can be given by noting that 
the discussion on \cite{Krishna12} pp. 1363/4 shows that $r^+ \circ \varphi_{\pi_v}$ can be equipped with a quadratic form so that it takes values in ${\rm O}_4(\bfC)$: Let $V_K$ be  the space given by the hermitian $2\times 2$ matrices with entries in $K$ with the determinant as its quadratic form.  Then there is a homomorphism $\Phi: {\rm GL}_2(\bfC) \times {\rm GL}_2(\bfC) \to {\rm GSO}(V_K \otimes \bfR)$ defined by $\Phi(A,B):=(X \mapsto AX B^t$). Since $\pi$ was assumed to have trivial central character (so that its $L$-parameters take values in ${\rm SL}_2(\bfC)$) we can modify  $r^+ \circ \varphi_{\pi_v}$ to take values in $${}^L{\rm O}(V_K)= \{ (A,g) \in {\rm O}_4(\bfC) \times W_{\bfQ_v} | {\rm det}(A)=\chi_{K/\bfQ}(g)\}$$ as follows: For $g \in W_{\bfQ_v}$ and $A \in {\rm SL}_2(\bfC)$
 define
$$(r^+ \circ \varphi_{\pi_v})(g,A)=\begin{cases} ((\Phi \times \emptyset)\circ \varphi_{\pi_v}(g, A), g) & \text{ if } \chi_{K/\bfQ}(g)=1\\ ((\Phi \times \emptyset)\circ \varphi_{\pi_v}(g c, A) \Theta, g) & \text{ if } \chi_{K/\bfQ}(g)=-1, \end{cases}$$
where $W_{\bfQ_v}=W_{K_w} \cup  c W_{K_w}$ and $\Theta$ denotes the involution on $V_K \otimes \bfR\cong \bfC^2 \otimes \bfC^2$ corresponding to $x \otimes y \mapsto y \otimes x$.

Appendix C of \cite{GI14} just before Theorem C.5, or \cite{GI15} Theorem 4.4(i)(a), now prove (for $v \neq 2$) that under the theta correpondence between ${\rm O}(V_K)$ and ${\rm Sp}_4$ an $L$-parameter $\phi$ for ${\rm O}(V_K)$ is mapped to the $L$-parameter $\chi_{K/\bfQ} \oplus \phi \otimes \chi_{K/\bfQ}$ valued in ${\rm SO}_5(\bfC)$ (as $\chi_{K/\bfQ}$ is the discriminant character of $V$).

We summarize these relations in the following diagram (which commutes by the description of the packets $\Pi(\chi_v, \pi_v)$ given by \cite{Ro01} Table 4):

$$\xymatrix{  & \ar[ld]  \varphi_{\pi_v} \in \Phi(R_{K/\bfQ} {\rm PGL}_2) \ar[rd]&\\
\varphi(\chi_v, \pi_v) \in \Phi({\rm GSp}_4) \ar[rd]_{{\rm std}} \ar@{^{(}->}[r]_{{\rm spin}\times {\rm sim}}&  \Phi({\rm GL}_4) \times \Phi({\rm GL}_1) \ni (I_K^{\bfQ} \circ \varphi_{\pi_v}, \chi_v)& \Phi({\rm O}(V_K)) \ni r^+ \circ \varphi_{\pi_v}\otimes \chi_v^{-1}\ar[ld] \\
& \chi_{K/\bfQ,v} \oplus r^- \circ \varphi_{\pi_v} \otimes \chi_v^{-1} \in \Phi({\rm Sp}_4)& }
$$ 

\end{proof}

\begin{cor}
For $\Pi \in \Pi(\chi,\pi)$ we have $$L_v(s,\Pi, {\rm spin})=L_v(s, I_K^{\bfQ}(\pi))$$ and $$L_v(s,\Pi, {\rm std})=L_v(s,\chi_{K/\bfQ}) L_v(s,\pi,  r^- \otimes \chi^{-1})$$ for all places $v \neq 2$.  
\end{cor}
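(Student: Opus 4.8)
The plan is to deduce the Corollary directly from Proposition \ref{Lparam} by combining its three parts through the standard recipe expressing local $L$-factors in terms of $L$-parameters. Recall that for a place $v \neq 2$ and an automorphic representation $\Pi_v \in \Pi(\chi_v, \pi_v)$ with $L$-parameter $\varphi(\chi_v, \pi_v) : W'_{\bfQ_v} \to {\rm GSp}_4(\bfC)$, the local spin and standard $L$-factors are by definition $L_v(s, \Pi, {\rm spin}) = L(s, {\rm spin} \circ \varphi(\chi_v, \pi_v))$ and $L_v(s, \Pi, {\rm std}) = L(s, {\rm std} \circ \varphi(\chi_v, \pi_v))$, where the right-hand sides are Artin-type $L$-factors attached to Weil--Deligne representations. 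So the Corollary is really just a translation of the equalities of $L$-parameters in Proposition \ref{Lparam}(1) and (3) into equalities of $L$-factors, using that $L(s, -)$ is a well-defined invariant of (isomorphism classes of) Weil--Deligne representations and is additive in direct sums and multiplicative under the corresponding decompositions.

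Concretely, first I would handle the spin $L$-factor: by Proposition \ref{Lparam}(1), ${\rm spin} \circ \varphi(\chi_v, \pi_v) = I_K^{\bfQ} \circ \varphi_{\pi_v}$, and by the compatibility of automorphic induction with local $L$-factors (Arthur--Clozel), the $L$-factor attached to $I_K^{\bfQ} \circ \varphi_{\pi_v}$ is precisely $L_v(s, I_K^{\bfQ}(\pi))$. This gives the first identity. Then for the standard $L$-factor, Proposition \ref{Lparam}(3) gives ${\rm std} \circ \varphi(\chi_v, \pi_v) = \chi_{K/\bfQ,v} \oplus \left( r^+ \circ \varphi_{\pi_v} \otimes \chi_v^{-1} \chi_{K/\bfQ,v} \right)$. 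Taking $L$-factors and using additivity under $\oplus$ yields $L_v(s, \Pi, {\rm std}) = L_v(s, \chi_{K/\bfQ}) \cdot L(s, r^+ \circ \varphi_{\pi_v} \otimes \chi_v^{-1} \chi_{K/\bfQ,v})$. Finally, one identifies $L(s, r^+ \circ \varphi_{\pi_v} \otimes \chi_v^{-1} \chi_{K/\bfQ,v})$ with $L_v(s, \pi, r^- \otimes \chi^{-1})$ using the definition of the Asai $L$-factor from Section \ref{Asaidefinition} (i.e. $L(s, \pi_v, r^{\pm}_v) = L(s, r^{\pm} \circ \phi_v)$) together with the twisting identity ${\rm As}^-(\rho) \cong {\rm As}^+(\rho) \otimes \chi_{K/\bfQ}$, which at the level of parameters reads $r^+ \circ \varphi_{\pi_v} \otimes \chi_{K/\bfQ,v} = r^- \circ \varphi_{\pi_v}$; the extra twist by $\chi_v^{-1}$ matches the $\otimes \chi^{-1}$ in the notation $L_v(s, \pi, r^- \otimes \chi^{-1})$.

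The main obstacle, such as it is, is bookkeeping rather than mathematics: one must be careful that the notion of ``$L$-factor attached to an $L$-parameter'' is consistent between the {\rm spin}, {\rm std}, and Asai settings (all three are ultimately $\det(I - (\text{Frobenius})q^{-s})^{-1}$ on the appropriate inertia invariants of the associated Weil--Deligne representation), and that the twist conventions for $\chi$ versus $\chi^{-1}$ and for $\chi_{K/\bfQ}$ are tracked faithfully through Proposition \ref{Lparam}. There is also the mild point that Proposition \ref{Lparam} is only asserted for $v \neq 2$, which is exactly the range in which the Corollary is stated, so no extension is needed. One should also note that these are the \emph{local} $L$-factors for places $v \neq 2$ only, so no global functional equation or analytic continuation is invoked --- the statement is purely a place-by-place identity of rational functions in $q_v^{-s}$, which follows formally once Proposition \ref{Lparam} is in hand.
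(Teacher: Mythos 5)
Your proposal is correct and is essentially the paper's own (implicit) argument: the Corollary is stated without a separate proof precisely because it is the formal translation of Proposition \ref{Lparam}(1) and (3) into identities of local $L$-factors, using additivity of $L$-factors under direct sums, the relation $r^- = r^+ \otimes \chi_{K/\bfQ}$, and local--global compatibility of automorphic induction. Your bookkeeping of the twists by $\chi_v^{-1}$ and $\chi_{K/\bfQ,v}$ matches the paper's conventions, so nothing further is needed.
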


\subsection{Characterization of elements of $\Pi(\chi,\pi)$ via Galois representation}
We first  prove that the global packets $\Pi(\chi,\pi)$ are full near equivalence classes:
\begin{lemma} \label{nearequiv}
Assume that $\pi$ is tempered.
Let $\Pi=\bigotimes' \Pi_v$ be an irreducible cuspidal unitary automorphic representation for ${\rm GSp}_4(\bfA_{\bfQ})$ with central character $\chi$. 
If $L_v(s,\Pi, {\rm spin})=L_v(s,I^{\bfQ}_{K} \pi)$ for all but finitely many places $v$ of $\bfQ$ then $\Pi \in \Pi(\chi,\pi)$.
\end{lemma}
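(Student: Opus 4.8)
The plan is to identify $\Pi$ within the known classification of cuspidal automorphic representations of $\mathrm{GSp}_4$, using the hypothesis on the $\mathrm{spin}$ $L$-function to pin down its Arthur/Langlands parameter. First I would recall that, since $I_K^{\bfQ}(\pi)$ is cuspidal on $\mathrm{GL}_4(\adeleQ)$ (as $\pi$ is not a base change from $\bfQ$), the partial $L$-function $L^S(s, I_K^{\bfQ}\pi)$ is the standard $L$-function of a cuspidal representation of $\mathrm{GL}_4$; by strong multiplicity one for $\mathrm{GL}_n$ the equality $L_v(s,\Pi,\mathrm{spin}) = L_v(s, I_K^{\bfQ}\pi)$ at almost all $v$ determines the functorial $\mathrm{spin}$-transfer of $\Pi$ to $\mathrm{GL}_4$ to be exactly $I_K^{\bfQ}(\pi)$. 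Combined with the fact that the central character of $\Pi$ is $\chi$, this pins down the global parameter of $\Pi$ in the sense of the Arthur classification for $\mathrm{GSp}_4$ (equivalently $\mathrm{PGSp}_4 \cong \mathrm{SO}_5$).

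Next I would invoke Arthur's classification (as made explicit for $\mathrm{GSp}_4$ by Gee--Ta\"ibi, or the $\mathrm{PGSp}_4$ version) to conclude that $\Pi$ lies in the global $A$-packet attached to this parameter, and that the parameter is of general type (or of Yoshida/Saito--Kurokawa type only if $I_K^{\bfQ}\pi$ is non-cuspidal or has the appropriate shape, which is excluded here since $\pi$ is tempered and not a base change). This places $\Pi$ in the near-equivalence class consisting precisely of the members of $\Pi(\chi,\pi)$ as constructed by Roberts and Takeda via the theta correspondence from $\mathrm{GO}(V_K)$, matching Proposition~\ref{Lparam}(1)--(2). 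The local components $\Pi_v$ for $v\neq 2$ are then forced to lie in the local packets $\Pi(\chi_v,\pi_v)$ by local--global compatibility of the parameters together with the explicit local packet descriptions (\cite{GT2011} Section 11, \cite{CG15}, \cite{Ro01} Table 4). The place $v=2$ and any remaining archimedean component would be handled by the rigidity of the global packet — once all but one local component is fixed, multiplicity one in the relevant $A$-packet (or a standard argument with the functional equation / the fact that the partial $L$-function already determines the full automorphic representation up to the finitely many bad places) forces $\Pi_2$ into $\Pi(\chi_2,\pi_2)$ as well.

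The main obstacle I anticipate is the subtlety at the bad places, particularly $v=2$ (which is excluded throughout the paper's local analysis) and the archimedean place: knowing the $L$-function at almost all finite places gives the parameter but not a priori the full multiplicity-one statement needed to conclude $\Pi_v \in \Pi(\chi_v,\pi_v)$ at every $v$. Resolving this cleanly requires either the full strength of the multiplicity formula in Arthur's classification for $\mathrm{GSp}_4$ (so that the global packet is exactly a near-equivalence class and every member with the right almost-everywhere behavior is automatically in the packet everywhere), or an argument that the theta lifts from $\mathrm{GO}(V_K)$ exhaust the near-equivalence class — the latter is essentially the content of the work of Roberts and Takeda cited later (Proposition~\ref{thetalifts}) and of \cite{GT2011}. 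A secondary point to be careful about is ensuring $I_K^{\bfQ}(\pi)$ is genuinely cuspidal and self-dual of symplectic type with the correct similitude, so that its descent to $\mathrm{GSp}_4$ exists and is unique up to the twist distinguishing $\chi = \mathbf{1}$ from $\chi = \chi_{K/\bfQ}$; this was already established via \eqref{Lambdasquare}, so it can be quoted directly.
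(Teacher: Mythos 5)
Your route through Arthur's classification is genuinely different from the paper's, but as written it has a real gap at the bad places, which is precisely the point you flag and then do not resolve. Granting the classification for the similitude group ${\rm GSp}_4$ (already a much heavier input than anything the paper uses, and one that the paper deliberately avoids), what you obtain is that every local component $\Pi_v$ lies in the local \emph{Arthur} packet of the localized parameter. The lemma, however, asserts $\Pi_v \in \Pi(\chi_v,\pi_v)$, where these are the \emph{theta} packets of Roberts used to define the global packet $\Pi(\chi,\pi)$; so you still need an identification of the local Arthur packets with Roberts' packets at \emph{every} place, including $v=2$ and $v=\infty$. The paper's own packet comparison (Proposition \ref{Lparam}, via \cite{GT2011} Section 11, \cite{CG15} and Roberts' Table 4) is only stated for $v \neq 2$, so this comparison is not available off the shelf where you need it most. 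Your proposed fixes do not supply it: "multiplicity one in the $A$-packet" and "rigidity" are beside the point, since membership of $\Pi_2$ in the local $A$-packet is automatic from the classification and the issue is the identification of that packet with $\Pi(\chi_2,\pi_2)$; and "the partial $L$-function determines the representation up to the bad places" is just a restatement of near-equivalence.

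The paper instead adapts Roberts' multiplicity-one argument (\cite{Ro01} Theorem 8.5) and never invokes Arthur. At good places it uses \cite{GT2011} Lemma 6.1 (the spin parameter plus the central character pin down $\Pi_v \in \Pi(\chi_v,\pi_v)$). For the remaining places it runs the theta correspondence backwards: restricting $\Pi$ to ${\rm Sp}_4$ and using Proposition \ref{Lparam}(3), the twisted partial standard $L$-function equals $\zeta^S(s)\,L^S(s,\pi,r^+\otimes\chi^{-1})$, which has a pole at $s=1$ by Shahidi's non-vanishing of the Asai $L$-function; the Kudla--Rallis--Soudry criterion (\cite{KRS} Theorem 7.1) then produces a non-zero, in fact cuspidal, theta lift of $\Pi$ to ${\rm GO}(X,\bfA)$ for a four-dimensional quadratic space $X$ of discriminant ${\rm disc}(K/\bfQ)$. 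Cuspidal representations of ${\rm GO}(X,\bfA)$ are parametrized by triples $(\pi',\chi',\delta')$, and strong multiplicity one for ${\rm GSO}(X,\bfA)$ (\cite{HST} Corollary 1) together with the good-place data forces $\pi'=\pi$, $\chi'=\chi$; hence $\Pi$ is a theta lift from the datum $(\pi,\chi)$ and lies in $\Pi(\chi,\pi)$ at all places simultaneously, with no separate treatment of $v=2$ or $\infty$. If you want to salvage your approach, you would need to add an explicit comparison of local Arthur and theta packets at the bad places (or argue, as in Proposition \ref{thetalifts}, that the theta lifts exhaust the near-equivalence class), at which point you are essentially reproducing the theta-theoretic input the paper uses directly.
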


\begin{proof}
We adapt the proof of multiplicity one in \cite{Ro01} Theorem 8.5.

By assumption there is a finite set of places $S$ such that for $v \notin S$ the $L$-parameters corresponding to $\Pi$ and $\Pi(\chi, \pi)$ coincide when mapped to ${\rm GL}_4(\bfC)$ via the spin embedding. As the central character of $\Pi$ is assumed to be $\chi$, \cite{GT2011} Lemma 6.1 proves that $\Pi_v \in \Pi(\chi_v, \pi_v)$ for $v \notin S$.

Let $U$ be a space of cuspforms on ${\rm GSp}_4(\bfA)$ realising $\Pi$. Restrict these functions to ${\rm Sp}_4$ and let $U_1$ be a non-zero irreducible subspace. Denote the corresponding representation of ${\rm Sp}_4(\bfA)$ by $\Pi_1$. From Proposition \ref{Lparam}(3) we deduce that the twisted partial standard $L$-function of $\Pi_1$ is 
$$L^S(s,\Pi_1 \otimes \chi_{K/\bfQ}, {\rm std})=\zeta^S(s) L^S(s, \pi, r^+ \otimes \chi^{-1}),$$ so it has a pole at $s=1$ by the non-vanishing of the Asai $L$-function. 

As in \cite{Ro01} p. 307 we can therefore apply \cite{KRS} Theorem 7.1 to conclude that there exists a 4-dimensional quadratic space $X'$ with discriminant $d={\rm disc}(K/\bfQ)$ such that the theta lift $\Theta_X(U_1)$ is a non-zero space of automorphic forms on ${\rm O}(X, \bfA)$. By the proof of \cite{Ro01} Theorem 8.3 this implies that $\Theta_X(U) \neq 0$.
As in \cite{Ro01} p. 307/8 one can further show that $\Theta_X(U)$ is contained in the space of cuspforms for ${\rm GO}(X, \bfA)$. 

By \cite{HST} Proposition 2 (or more generally \cite{Ro01} Section 6)  irreducible cuspidal representations of ${\rm GO}(X, \bfA)$ can be represented by triples $(\pi', \chi', \delta')$. Let $(\pi', \chi', \delta')$ be the data for an irreducible subspace of  $\Theta_X(U)$. Since $\Pi_v \in \Pi(\chi_v, \pi_v)$ for $v \notin S$ we know that  $(\pi'_v, \chi'_v)=(\pi_v, \chi_v)$ for $v \notin S$. By strong multiplicity one for ${\rm GSO}(X, \bfA)$ (see e.g. \cite{HST} Corollary 1) we deduce $\pi'=\pi$ and $\chi'=\chi$.
\end{proof}

\begin{thm} \label{Galreps}
Let $\Pi$ be a cuspidal automorphic representation of ${\rm GSp}_4(\bfA_{\bfQ})$ with central character $\chi$ and $\Pi_{\infty}$ a holomorphic (limit of) discrete series of weight (Blattner parameter) $(k_1, k_2)$ with $k_1 \geq k_2 \geq 2$. For each prime $q$ there exists a continuous semi-simple $q$-adic Galois representation
$$\rho_{\Pi,q}:G_{\bfQ} \to {\rm GL}_4(\ov{\bfQ}_q)$$ which is unramified away from $q$ and the set $S$ of places $v$ where $\Pi_v$ is not spherical such that (for $w=k_1+k_2-3$)
\begin{itemize}
\item $L_v(s- \frac{w}{2},\Pi, {\rm spin})=L_v(s,\rho_{\Pi,q}):={\rm det}(1- \rho_{\Pi,q}({\rm Frob}_v) v^{-s})^{-1}$ for $v \notin S \cup \{q\}$, 
\item $\rho_{\Pi,q}^{\vee}\cong \rho_{\Pi,q} \otimes \chi \epsilon^{-w}$, with $\chi \epsilon^{-w}$ odd,
\item if $\Pi_q$ is an unramified principal series with distinct Satake parameters then $\rho_{\Pi,q}|_{G_{\bfQ_q}}$ is crystalline (and short if $q>k_1+k_2-2$).
\end{itemize}
\end{thm}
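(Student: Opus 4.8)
The plan is to produce $\rho_{\Pi,q}$ by transferring $\Pi$ to $\mathrm{GL}_4$ along the spin embedding $\mathrm{spin}\colon \mathrm{GSp}_4\hookrightarrow\mathrm{GL}_4$ and then assembling it from Galois representations already available on general linear groups; this is in essence a synthesis of the work of Laumon, Weissauer and Taylor (regular weight, via Siegel threefolds), of Arthur and its $\mathrm{GSp}_4$ refinement (the endoscopic classification), and of a $p$-adic interpolation argument for the boundary of the discrete series range.

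First I would invoke the endoscopic classification: applying Arthur's classification of $\mathrm{Sp}_4$- and $\mathrm{SO}_5$-packets (after restricting $\Pi$) together with the central-character bookkeeping needed to pass to $\mathrm{GSp}_4$-packets, attach to $\Pi$ a global Arthur parameter $\psi$. Since $\Pi_\infty$ is a holomorphic (limit of) discrete series of weight $(k_1,k_2)$, $\psi$ is of general type, of Yoshida type, or of one of the CAP types (Saito--Kurokawa and the remaining degenerate cases), and in each case $\mathrm{spin}\circ\psi$ corresponds to an isobaric automorphic representation of $\mathrm{GL}_4(\mathbf{A}_{\mathbf{Q}})$ built out of cuspidal representations of $\mathrm{GL}_1$, $\mathrm{GL}_2$ and $\mathrm{GL}_4$ that are self-dual up to the twist by $\chi$. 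When the weight is regular, i.e. $k_2\ge 3$, each constituent is cohomological: the $\mathrm{GL}_2$-pieces carry the classical Eichler--Shimura--Deligne representations, while a cuspidal $\mathrm{GL}_4$-constituent is conjugate self-dual of symplectic type (the spin form on $\mathrm{GSp}_4$ being alternating, with multiplier the similitude character), so the construction of Galois representations for conjugate self-dual regular algebraic cuspidal representations of $\mathrm{GL}_n$ applies. Taking $\rho_{\Pi,q}$ to be the direct sum of these pieces then disposes of the regular case and makes semisimplicity automatic.

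The main obstacle is the holomorphic \emph{limit} of discrete series, $k_2=2$: there $(\mathrm{spin}\circ\psi)_\infty$ is non-regular, this weight does not occur in the étale cohomology of Siegel threefolds with algebraic coefficients, and the $\mathrm{GL}_n$ constructions do not apply verbatim. Here the plan is to deform $p$-adically: realize $\Pi$ as a classical point of the $\mathrm{GSp}_4$ eigenvariety (after a $p$-stabilization), use density of cohomologically regular classical points to find a sequence $\Pi^{(j)}$ of regular classical forms converging to $\Pi$ along an irreducible component, attach $\rho_{\Pi^{(j)},q}$ by the previous step, and pass to the limit of the associated $4$-dimensional pseudo-representations; continuity of the eigenvariety yields the Hecke--Frobenius compatibility in the limit, and the polarization condition, being closed, is inherited by the specialization. (One may alternatively build $\rho_{\Pi,q}$ for regular weight directly from the cohomology of Siegel modular threefolds and then deform; in either case a certain amount of care with the local-at-$q$ structure along the family is required.)

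It remains to verify the three listed properties. The identity $L_v(s-\tfrac{w}{2},\Pi,\mathrm{spin})=L_v(s,\rho_{\Pi,q})$ for $v\notin S\cup\{q\}$ is built into the construction, the shift by $w/2$ being exactly the passage from the analytic to the arithmetic normalization with motivic weight $w=k_1+k_2-3$ (so that Satake parameters become Weil numbers of weight $w$). The duality $\rho_{\Pi,q}^\vee\cong\rho_{\Pi,q}\otimes\chi\epsilon^{-w}$ transports to the Galois side the alternating spin pairing, whose multiplier is the similitude character; a short computation with the archimedean parameter shows $\chi\epsilon^{-w}$ is odd, equivalently that $\rho_{\Pi,q}$ is odd as a $\mathrm{GSp}_4$-valued representation. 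Finally, if $\Pi_q$ is an unramified principal series with distinct Satake parameters, then $\rho_{\Pi,q}|_{G_{\mathbf{Q}_q}}$ is crystalline --- in the regular case because the relevant Siegel variety has good reduction at $q$ and one applies the $p$-adic comparison theorems (equivalently, the chosen refinement is non-critical, the distinctness of the Satake parameters ensuring this), and in the limit case this crystallinity is propagated along the family; and since the Hodge--Tate weights are $\{0,\,k_2-2,\,k_1-1,\,k_1+k_2-3\}\subseteq[0,w]$, the representation is short crystalline in the sense of \cite{DiamondFlachGuo04} precisely when $w<q-1$, i.e. $q>k_1+k_2-2$.
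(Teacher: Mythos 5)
Your proposal is essentially correct, but it reconstructs the result along a partly different route than the paper, whose proof is a short citation-based argument: for $\Pi_\infty$ holomorphic discrete series (regular weight) it simply invokes the constructions of Taylor, Weissauer and Laumon from the \'etale cohomology of Siegel threefolds; for the limit of discrete series it quotes either Taylor's congruence argument \cite{T91} or the $p$-adic family arguments of \cite{Jorza10} and \cite{CPMok14}; essential self-duality is deduced from $\Pi^{\vee}\cong\Pi\otimes\chi^{-1}$ together with Chebotarev (and Brauer--Nesbitt), oddness from the parity condition for non-vanishing of holomorphic Siegel modular forms, and crystallinity in the limit case from \cite{Jorza10} Theorem 4.3.4 and \cite{CPMok14} -- which is exactly where the hypothesis of distinct Satake parameters enters (in the regular case crystallinity is known without it, as the paper remarks). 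You instead run the regular case through the endoscopic classification for ${\rm GSp}_4$, the spin transfer to ${\rm GL}_4$ and the Galois constructions for (essentially, not ``conjugate'') self-dual regular algebraic cuspidal representations of ${\rm GL}_n$, and the limit case through eigenvarieties and pseudo-representations; the latter coincides with one of the paper's cited routes, while the former buys a transparent description of the isobaric types (general, Yoshida, CAP), making semisimplicity and the duality of each constituent visible, at the cost of a much heavier input (the full similitude-group classification) than the classical Laumon--Taylor--Weissauer constructions the paper leans on. One point to adjust: your justification of the second bullet via ``transporting the alternating spin pairing to the Galois side'' is only known unconditionally for $\Pi$ neither CAP nor endoscopic (Weissauer, cf.\ \cite{BellaicheCheneviersign} Corollary 1.3, as the paper notes); the duality $\rho_{\Pi,q}^{\vee}\cong\rho_{\Pi,q}\otimes\chi\epsilon^{-w}$ as stated should rather be obtained, as in the paper, from $\Pi^{\vee}\cong\Pi\otimes\chi^{-1}$, the matching of unramified $L$-factors and Chebotarev -- or, within your framework, from the essential self-duality of the isobaric transfer -- which works in all cases. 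With that caveat, and granting the sketch-level treatment of crystallinity along the family (which in \cite{Jorza10}, \cite{CPMok14} is genuinely delicate and not a mere closedness argument), your outline establishes the same statement.
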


\begin{proof}
For $\Pi_{\infty}$ holomorphic discrete series this is due to Taylor, Weissauer and Laumon. In fact, in this case more is known, in particular crystallinity without assumption on Satake parameters, and that the Galois representation preserves a non-degenerate $E_{\fq}$-bilinear symplectic form if $\Pi$ is neither endoscopic or CAP (Weissauer, \cite{BellaicheCheneviersign} Corollary 1.3). The existence of the Galois representation in the limit of discrete series case follows from the discrete series case due to work of Taylor \cite{T91} (or arguments using $p$-adic analytic families, see \cite{Jorza10} and \cite{CPMok14}). Essential self-duality is implied by $\Pi^{\vee} \cong \Pi \otimes \chi$ and the Chebotarev density theorem. The oddness of $\chi \epsilon^{-w}$ follows from the usual parity condition for the non-vanishing of the space of holomorphic Siegel modular forms (see e.g. (2.7) in \cite{Yamauchi}). The statement about crystallinity in the limit of discrete series case has been proven in \cite{Jorza10} Theorem 4.3.4 and (for general CM fields) \cite{CPMok14} Theorem 4.14 and Proposition 4.16.
\end{proof}

\begin{rem}
Note that Lemma \ref{sign} can be used to prove that the image of $\rho_{\Pi,p}$ lies in ${\rm GSp}_4(\ov{\bfQ}_q)$ also for non-regular $\Pi$ such that  $\rho_{\Pi,q}$ is residually absolutely irreducible. This should  follow more generally from recent work of V. Lafforgue \cite{Laf12} Proposition 10.7.
\end{rem}

\begin{cor} \label{cor76}
Let $\Pi=\bigotimes' \Pi_v$ be an irreducible cuspidal unitary automorphic representation for ${\rm GSp}_4(\bfA_{\bfQ})$ with $\Pi_{\infty}$  holomorphic (limit of) discrete series and central character $\chi$. Assume that $\pi$ is tempered.
If the Galois representation associated to $\Pi$ satisfies $\rho_{\Pi} \cong {\rm ind} \rho_{\pi}$ then $\Pi\in \Pi(\chi, \pi)$.
\end{cor}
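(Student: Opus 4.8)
The plan is to reduce to Lemma~\ref{nearequiv}, which already characterises the members of $\Pi(\chi,\pi)$ among cuspidal unitary automorphic representations of ${\rm GSp}_4(\bfA)$ by the condition $L_v(s,\Pi,{\rm spin})=L_v(s,I^{\bfQ}_K\pi)$ at almost all $v$. So the only real task is to convert the Galois-theoretic hypothesis $\rho_\Pi\cong{\rm ind}\,\rho_\pi$ into precisely that $L$-function condition.

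First I would fix a prime $q$ and invoke Theorem~\ref{Galreps}: it attaches to $\Pi$ a semisimple representation $\rho_{\Pi,q}$, unramified outside $S\cup\{q\}$, with $L_v(s-\tfrac{w}{2},\Pi,{\rm spin})=L_v(s,\rho_{\Pi,q})$ for $v\notin S\cup\{q\}$, where $w=k_1+k_2-3$. Next, using the hypothesis $\rho_{\Pi,q}\cong{\rm ind}_K^{\bfQ}(\rho_\pi)$ (after extending coefficients to $\ov{\bfQ}_q$), I would compute the right-hand side at an unramified $v$ by Mackey's formula: if $v$ splits in $K$ it is the product of the two local factors of $\rho_\pi$ at the places of $K$ above $v$, while if $v$ is inert it is the local factor of the induced representation. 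In either case, since $\rho_\pi$ is attached to $\pi$ and hence its Frobenius eigenvalues at an unramified place $w$ of $K$ are the Satake parameters of $\pi_w$, this equals the local Euler factor of the automorphic induction $I_K^{\bfQ}(\pi)$ at $v$, by compatibility of automorphic induction with the unramified local Langlands correspondence; this is exactly the relation ${\rm spin}\circ\varphi(\chi_v,\pi_v)=I_K^{\bfQ}\circ\varphi_{\pi_v}$ of Proposition~\ref{Lparam}(1) together with the Corollary immediately following it. Absorbing the shift by $\tfrac{w}{2}$ into the normalisation, I obtain $L_v(s,\Pi,{\rm spin})=L_v(s,I_K^{\bfQ}(\pi))$ for all but finitely many $v$.

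Finally, since $\pi$ is tempered by assumption and $\Pi$ is an irreducible cuspidal unitary automorphic representation of ${\rm GSp}_4(\bfA)$ with central character $\chi$, Lemma~\ref{nearequiv} applies and gives $\Pi\in\Pi(\chi,\pi)$.

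The main obstacle here is bookkeeping rather than anything conceptual: one must check that the arithmetic-normalisation Tate twist in Theorem~\ref{Galreps} is consistent with the unitary normalisation of $I_K^{\bfQ}(\pi)$ used in Lemma~\ref{nearequiv} and Proposition~\ref{Lparam}, and that only the unramified local--global compatibility for $\rho_\pi$ is invoked (which is what is available through the work of Taylor, Berger--Harcos, et al.). It is also worth recording that $\pi$ not being a base change from $\bfQ$ — assumed throughout the paper — guarantees that ${\rm ind}_K^{\bfQ}(\rho_\pi)$, and hence $\rho_\Pi$, is irreducible, so that all hypotheses of Lemma~\ref{nearequiv} are genuinely in force.
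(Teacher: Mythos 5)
Your argument is correct and is exactly the derivation the paper intends for this corollary: combine the local--global compatibility of $\rho_{\Pi,q}$ from Theorem~\ref{Galreps} with the hypothesis $\rho_{\Pi}\cong{\rm ind}_K^{\bfQ}(\rho_{\pi})$ and the compatibility of $\rho_{\pi}$ with $\pi$ to match spin $L$-factors with those of $I_K^{\bfQ}(\pi)$ at almost all places, then invoke Lemma~\ref{nearequiv}. The remark about irreducibility of ${\rm ind}_K^{\bfQ}(\rho_{\pi})$ is harmless but not needed, since Lemma~\ref{nearequiv} only requires cuspidality, unitarity, the central character, temperedness of $\pi$, and the $L$-factor equality.
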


\begin{prop} \label{nonthetairred}
Let $q>k$ and $\pi$ a tempered cuspidal automorphic representation of ${\rm GL}_2(\bfA_K)$ of weight $k$ and trivial central character such that $\rho_{\pi}$ is crystalline.
%such that $\pi^c \not \equiv \pi$.
Assume %$(q, {\rm cond}(\pi) {\rm disc}(K/\bfQ))=1$, 
$\ov{\rho}_{\pi}:=\rho_{\pi} \mod{\fq}$ is absolutely irreducible and does not descend to $G_{\bfQ}$.

Let $\Pi=\bigotimes' \Pi_v$ be an irreducible cuspidal unitary automorphic representation for ${\rm GSp}_4(\bfA_{\bfQ})$ unramified at $q$ and with $\Pi_{\infty}$  holomorphic (limit of) discrete series of weight $(k_1,k_2)$ with $k_1+k_2-2=k$ and central character $\chi$ with $\rho_{\Pi}$ crystalline. Assume that $\ov{\rho}_{\Pi} \cong {\rm ind}_K^{\bfQ} \ov{\rho}_{\pi} \mod{\fq}$ but that $\Pi \notin \Pi(\chi,\pi)$. 

 Furthermore, assume that $\ov{\rho}_{\pi}$ does not have any non-trivial (short crystalline) characteristic zero deformations of conductor dividing ${\rm cond}(\rho_{\Pi}|_{G_K})$ and determinant $\epsilon^{k_1+k_2-3}$.
Then $\rho_{\Pi}|_{G_K}$ is absolutely irreducible.
\end{prop}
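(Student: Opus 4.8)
The plan is to argue by contradiction: assuming $\rho_{\Pi}|_{G_K}$ is not absolutely irreducible, I will show $\rho_{\Pi}\cong{\rm ind}_K^{\bfQ}\rho_{\pi}$ and then apply Corollary \ref{cor76} to conclude $\Pi\in\Pi(\chi,\pi)$, contradicting the hypothesis. Throughout, set $w=k_1+k_2-3$ as in Theorem \ref{Galreps}, so $w=k-1$ and $\det\rho_{\pi}=\epsilon^w$.

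First note $\ov{\rho}_{\pi}\not\cong\ov{\rho}_{\pi}^c$: otherwise $\ov{\rho}_{\pi}$, being absolutely irreducible and $c$-invariant, would extend to $G_{\bfQ}$ (the obstruction lying in $H^2({\rm Gal}(K/\bfQ),\ov{\bfF}_q^{\times})$, which vanishes as $q>2$), contradicting that $\ov{\rho}_{\pi}$ does not descend. Hence $\ov{\rho}_{\Pi}\cong{\rm ind}_K^{\bfQ}\ov{\rho}_{\pi}$ is irreducible and $\ov{\rho}_{\Pi}|_{G_K}\cong\ov{\rho}_{\pi}\oplus\ov{\rho}_{\pi}^c$, a sum of two distinct absolutely irreducible two-dimensional pieces. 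Since $\rho_{\Pi}$ is semisimple (Theorem \ref{Galreps}), so is $\rho_{\Pi}|_{G_K}$; a short bookkeeping argument with mod $\fq$ reductions (no one-dimensional constituent can occur, as $\ov{\rho}_{\pi}$ is absolutely irreducible, and the residual summands being defined over the residue field and non-isomorphic rules out Galois-conjugate pieces defined only over an extension) then forces $\rho_{\Pi}|_{G_K}\cong\sigma_1\oplus\sigma_2$ with $\sigma_i$ absolutely irreducible two-dimensional $E_{\fq}$-representations satisfying $\ov{\sigma}_1\cong\ov{\rho}_{\pi}$ and $\ov{\sigma}_2\cong\ov{\rho}_{\pi}^c$. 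As $\rho_{\Pi}$ restricts to $\rho_{\Pi}|_{G_K}$, conjugation by $\rho_{\Pi}(\tilde c)$ gives $\sigma_1\oplus\sigma_2\cong\sigma_1^c\oplus\sigma_2^c$, and since $\ov{\sigma}_1\not\cong\ov{\sigma}_1^c$ this forces $\sigma_2\cong\sigma_1^c$. Now Frobenius reciprocity yields a nonzero $G_{\bfQ}$-homomorphism ${\rm ind}_K^{\bfQ}\sigma_1\to\rho_{\Pi}$, which is an isomorphism because ${\rm ind}_K^{\bfQ}\sigma_1$ is irreducible (again $\sigma_1\not\cong\sigma_1^c$) and both sides are four-dimensional. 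Thus $\rho_{\Pi}\cong{\rm ind}_K^{\bfQ}\sigma_1$.

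It remains to identify $\sigma_1$ with $\rho_{\pi}$. As a direct summand of $\rho_{\Pi}|_{G_K}$, which is crystalline at $q$ with $q>k=k_1+k_2-2$, the representation $\sigma_1$ is short crystalline at the primes above $q$ and satisfies ${\rm cond}(\sigma_1)\mid{\rm cond}(\rho_{\Pi}|_{G_K})$. Restricting $\rho_{\Pi}^{\vee}\cong\rho_{\Pi}\otimes\chi\epsilon^{-w}$ to $G_K$ and matching reductions — using $\ov{\chi}=1$, which follows by applying the projection formula to ${\rm ind}_K^{\bfQ}(\ov{\rho}_{\pi})^{\vee}={\rm ind}_K^{\bfQ}(\ov{\rho}_{\pi})\otimes\ov{\epsilon}^{-w}$ together with Schur's lemma — gives $\sigma_1^{\vee}\cong\sigma_1\otimes\chi|_{G_K}\epsilon^{-w}$, hence $\det\sigma_1=\chi|_{G_K}^{-1}\epsilon^w$; twisting $\sigma_1$ (and $\rho_{\Pi}$) by the unique residually trivial, $c$-invariant square root of $\chi|_{G_K}$, which exists and extends to $G_{\bfQ}$ since $q>2$, we may assume $\det\sigma_1=\epsilon^w=\epsilon^{k_1+k_2-3}$ without altering ramification or crystallinity. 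Then $\sigma_1$ is a short crystalline characteristic-zero lift of $\ov{\rho}_{\pi}$ of determinant $\epsilon^{k_1+k_2-3}$ and conductor dividing ${\rm cond}(\rho_{\Pi}|_{G_K})$; since $\rho_{\pi}$ (crystalline by assumption) is another such lift, the hypothesis that $\ov{\rho}_{\pi}$ admits no non-trivial such deformations forces $\sigma_1\cong\rho_{\pi}$, so $\rho_{\Pi}\cong{\rm ind}_K^{\bfQ}\rho_{\pi}$. By Corollary \ref{cor76} — applicable since $\Pi$ is cuspidal unitary for ${\rm GSp}_4(\bfA_{\bfQ})$ with $\Pi_{\infty}$ a holomorphic (limit of) discrete series and $\pi$ is tempered — we get $\Pi\in\Pi(\chi,\pi)$, a contradiction, so $\rho_{\Pi}|_{G_K}$ is absolutely irreducible. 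The step I expect to need the most care is the last: checking that both $\sigma_1$ (after normalizing its determinant, which is where the central character $\chi$ enters) and $\rho_{\pi}$ genuinely lie in the single deformation class controlled by the rigidity hypothesis on $\ov{\rho}_{\pi}$.
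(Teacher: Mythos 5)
Your skeleton is the same as the paper's: decompose $(\rho_{\Pi}|_{G_K})^{\rm ss}$ into two $2$-dimensional pieces reducing to $\ov{\rho}_{\pi}$ and $\ov{\rho}_{\pi}^c$, use the polarization from Theorem \ref{Galreps} to pin down the determinant of the piece reducing to $\ov{\rho}_{\pi}$, invoke the no-nontrivial-deformations hypothesis to identify that piece with $\rho_{\pi}$, conclude $\rho_{\Pi}\cong{\rm ind}_K^{\bfQ}(\rho_{\pi})$ by Frobenius reciprocity and irreducibility of the induction, and contradict $\Pi\notin\Pi(\chi,\pi)$ via Corollary \ref{cor76}. (You merely swap the order, inducing from $\sigma_1$ first and identifying $\sigma_1\cong\rho_{\pi}$ second; the paper identifies $\sigma=\rho_{\pi}$ first and then induces. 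Your extra care about semisimplicity of the restriction and about $\sigma_2\cong\sigma_1^c$ is fine but not essential.)

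The one place where you genuinely diverge is the treatment of the central character, and there your added sub-argument has a gap. From ${\rm ind}_K^{\bfQ}(\ov{\rho}_{\pi})\otimes\ov{\chi}\cong{\rm ind}_K^{\bfQ}(\ov{\rho}_{\pi})$ Schur's lemma does \emph{not} give $\ov{\chi}=1$: it only says that $\ov{\chi}|_{G_K}$ is a self-twist character of $\ov{\rho}_{\pi}$ or satisfies $\ov{\rho}_{\pi}\otimes\ov{\chi}|_{G_K}\cong\ov{\rho}_{\pi}^c$ (for instance $\ov{\chi}=\chi_{K/\bfQ}$ is always consistent, and nontrivial self-twists occur for dihedral $\ov{\rho}_{\pi}$), so the "unique residually trivial square root" need not exist. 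Moreover, even granting the twist by $\eta$ with $\eta^2=\chi|_{G_K}$, after untwisting your conclusion reads $\rho_{\Pi}\cong{\rm ind}_K^{\bfQ}(\rho_{\pi}\otimes\eta^{-1})$, which is not the hypothesis $\rho_{\Pi}\cong{\rm ind}_K^{\bfQ}(\rho_{\pi})$ of Corollary \ref{cor76} unless $\eta$ is trivial (or itself a (conjugate-)self-twist); a ramified $\eta$ would also upset your conductor and crystallinity bookkeeping. The paper sidesteps all of this by implicitly taking $\chi|_{G_K}$ trivial when restricting the polarization (so $\det\sigma=\epsilon^{k_1+k_2-3}$ on the nose), which is legitimate in the intended application since, as discussed immediately after the proposition and in Proposition \ref{thetalifts}, $\chi=\chi_{K/\bfQ}^k$ there. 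So your proof is correct exactly in the regime where your extra maneuver is unnecessary ($\chi|_{G_K}$ trivial), and the patch you propose for general $\chi$ does not go through as written.
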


\begin{proof}
By assumption we have that $\ov{\rho}_{\Pi}|_{G_K}=\ov{\rho}_{\pi} \oplus \ov{\rho}^c_{\pi} \mod{\fq}$. 
Assume that $\rho_{\Pi}|_{G_K}$ is reducible (over $E_{\fq}$), i.e. that $(\rho_{\Pi}|_{G_K})^{\rm ss}=\sigma \oplus \sigma'$. By the polarization of $\rho_{\Pi}$ we know that ${\rm det}(\sigma)=\epsilon^{k_1+k_2-3}$ (the other option that $\sigma' \cong \sigma^{\vee} \otimes \epsilon^{-(k_1+k_2-3)}$ cannot occur as $\ov{\rho}_{\pi} \not \equiv \ov{\rho}^c_{\pi}$). Since $\rho_{\pi}$ does not deform non-trivially we get that one of the factors, say $\sigma$, equals $\rho_{\pi}$. By Frobenius reciprocity we have $$0 \neq {\rm Hom}_{G_K}(\rho_{\pi}, \rho_{\Pi}|_{G_K}) \cong {\rm Hom}_{G_{\bfQ}}({\rm ind}_K^{\bfQ}(\rho_{\pi}), \rho_{\Pi}),$$ which by the irreducibility of ${\rm ind}_K^{\bfQ}(\rho_{\pi})$ implies that $\rho_{\Pi} \cong {\rm ind} \rho_{\pi}$. By the previous Corollary this contradicts the assumption that $\Pi \notin \Pi(\chi,\pi)$.
\end{proof}

We note that this Proposition shows that for a cuspidal automorphic representation $\Pi$ as in its statement the Galois representation $\rho_{\Pi}$ associated to it by Theorem \ref{Galreps} would satisfy the assumptions for $\tilde R$ in Theorem \ref{main}. 
It remains to check that $\chi^{-1} \epsilon^{k_1+k_2-3}$ is congruent to $\chi^k_{K/\bfQ} \epsilon^{k-1}$ modulo $\fq$. 
This will be the case as 
%$\rho_{\Pi} \equiv {\rm ind}_K^{\bfQ}(\rho_{\pi}) \mod{\fq}$ implies $k_1+k_2-3 \equiv k-1 \mod{q-1}$ via the determinant and 
we will see in the next section that  $\chi$ has to be $\chi_{K/\bfQ}^k$ to ensure the existence of a suitable holomorphic theta lift.

\begin{rem}
To check the ``no non-trivial deformation" assumption in the proposition one would require ``$R=T$"-theorems generalizing Theorem 5.11 of \cite{CG}. Conjecturally, this would then be governed by $L(1, {\rm Ad}(\pi))$ being a $p$-unit (converse of result due to Urban \cite{U95}).
\end{rem}

\section{Comments on theta correspondence} \label{s8}

As explained in the introduction we propose that a cuspidal automorphic representation $\Pi$ as in Proposition \ref{nonthetairred} can be found using pullback formulas for Siegel Eisenstein series and Katsurada's \cite{Katsurada08} method of proving congruences of lifts with stable forms. For this we note the following result on realising elements of $\Pi(\chi, \pi)$ via the theta correspondence:

\begin{prop}[Roberts, Takeda] \label{thetalifts}
Let $\pi$ be a tempered cuspidal automorphic representation with trivial central character and weight $k \geq 2$.  Assume for $\chi_{\infty}=\mathbf{1}$ and ${\rm sgn}$ that $\Pi(\chi_{\infty}, \pi_{\infty})$ is exhausted  by the theta lifts from ${\rm GO}(3,1)(\bfR)$ with Harish-Chandra parameter $(k-1,0)$ described in \cite{HST} Lemma 12 (3).
Then for $\chi=\mathbf{1}$ or $\chi_{K/\bfQ}$ every element of $\Pi(\chi,\pi)$ occurs in the space of cuspforms on ${\rm GSp}_4(\bfA)$ with central character $\chi$ and can be realised via the theta correspondence between ${\rm GO}(3,1)$ and ${\rm GSp}_4$. In particular,  $\Pi(\chi,\pi)$ contains representations with $\Pi_{\infty}$ a holomorphic limit of discrete series of weight $(k,2)$ if and only if $\chi=\chi_{K/\bfQ}^k$.
\end{prop}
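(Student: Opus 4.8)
The argument assembles work of Roberts \cite{Ro01} and Takeda \cite{Takeda2009,Takeda2011} with the archimedean hypothesis. I would fix a quadratic space $V$ over $\bfQ$ of signature $(3,1)$ and discriminant ${\rm disc}(K/\bfQ)$, so that ${\rm GSO}(V) \cong (R_{K/\bfQ}{\rm GL}_2 \times {\rm GL}_1)/\{(z\,{\rm Id}_2,\Nm_{K/\bfQ}z^{-1})\}$ as in the introduction, and attach to $\pi$ (of trivial central character) and a Hecke character $\chi \in \{\mathbf{1},\chi_{K/\bfQ}\}$ of $\bfQ$ the cuspidal representation $\sigma = \pi \boxplus \chi$ of ${\rm GSO}(V)(\bfA)$, together with an extension $\hat\sigma$ to ${\rm GO}(V)(\bfA)$. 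The plan is to show that the global theta lift $\Theta(\hat\sigma)$ to ${\rm GSp}_4(\bfA)$ is a non-zero space of cusp forms with central character $\chi$, that it is contained in $\Pi(\chi,\pi)$, and that conversely every member of $\Pi(\chi,\pi)$ arises this way.

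For non-vanishing and cuspidality I would use the tower property of theta lifts together with the Rallis inner product formula: $\Theta(\hat\sigma)$ is non-zero and cuspidal once (i) each local theta lift $\Theta(\hat\sigma_v)$ to ${\rm GSp}_4(\bfQ_v)$ is non-zero, and (ii) the relevant value of the degree-$4$ standard $L$-function of $\sigma$ --- which up to normalisation is the Asai $L$-function $L(1,\pi,r^{\pm}\otimes\chi^{-1})$ --- does not vanish. Point (ii) is Shahidi's non-vanishing theorem \cite{Shahidi81}, already invoked in the excerpt. Point (i) at finite places is supplied by the local theta correspondence for similitude groups, Roberts' explicit tables \cite{Ro01} (see also \cite{GT2011} Section 11 and \cite{CG15}), and Takeda's local non-vanishing results \cite{Takeda2009,Takeda2011}; at $v=\infty$ it is exactly the hypothesis that $\Pi(\chi_\infty,\pi_\infty)$ is exhausted by the archimedean theta lifts of \cite{HST} Lemma 12(3). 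Cuspidality of $\Theta(\hat\sigma)$ then follows because $\pi$ is cuspidal and not a base change, so the theta lift of $\hat\sigma$ to the next smaller member of the relevant Witt tower vanishes (cf.\ \cite{Ro01} pp.~307--308 and the argument already given for Lemma \ref{nearequiv}).

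That $\Theta(\hat\sigma)$ lies in $\Pi(\chi,\pi)$ follows by comparing $L$-parameters: at every place the spin parameter of a theta lift equals $I^{\bfQ}_K\circ\varphi_{\pi_v}$ and the similitude is $\chi_v$, which by Proposition \ref{Lparam} characterises $\Pi(\chi_v,\pi_v)$. For the exhaustion of the global packet I would combine the local theta correspondence (Roberts' Table 4, \cite{GT2011} Section 11, \cite{CG15}), which realises each member of $\Pi(\chi_v,\pi_v)$ as a local theta lift, with Roberts' multiplicity results \cite{Ro01} (Theorems~8.3, 8.5 and the surrounding analysis): every $\Pi = \bigotimes_v'\Pi_v$ with $\Pi_v \in \Pi(\chi_v,\pi_v)$ for all $v$ is the global theta lift of the corresponding member of the ${\rm GO}(V)(\bfA)$-packet attached to a triple $(\pi,\chi,\delta)$, and hence occurs with multiplicity one in the cuspidal spectrum of ${\rm GSp}_4(\bfA)$ with central character $\chi$ and is realised via the ${\rm GO}(3,1)$--${\rm GSp}_4$ theta correspondence. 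This gives the first assertion.

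For the ``in particular'' clause I would analyse the archimedean datum of \cite{HST} Lemma 12(3): the theta lifts in question to ${\rm GSp}_4(\bfR)$ have Harish-Chandra parameter $(k-1,0)$, and a holomorphic limit of discrete series of ${\rm GSp}_4(\bfR)$ of weight $(k_1,k_2)$ with $k_1 \geq k_2 \geq 2$ has Harish-Chandra parameter $(k_1-1,k_2-2)$; so $(k-1,0)$ corresponds to weight $(k,2)$. If $\Pi(\chi,\pi)$ contains a representation with $\Pi_\infty$ of this type, then by the parity condition for non-vanishing of holomorphic Siegel modular forms recalled in Theorem \ref{Galreps} (with $w=k_1+k_2-3=k-1$) the character $\chi\epsilon^{1-k}$ is odd, i.e.\ $\chi(c)=(-1)^k$ for $c$ a complex conjugation; since $\chi_{K/\bfQ}$ is odd this forces $\chi=\chi_{K/\bfQ}^k$ among the two global choices. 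Conversely, for $\chi=\chi_{K/\bfQ}^k$ the archimedean parity is correct and \cite{HST} Lemma 12(3) does produce the holomorphic limit of discrete series of weight $(k,2)$ in $\Pi(\chi_\infty,\pi_\infty)$, which by the exhaustion lifts to a member of the global packet. The step I expect to need the most care is this last archimedean analysis: pinning down which member(s) of the limit-of-discrete-series $L$-packet of ${\rm GSp}_4(\bfR)$ occur as theta lifts from ${\rm GO}(3,1)(\bfR)$ for each parity of the similitude character, and matching the normalisation of \cite{HST} Lemma 12(3) with the weight $(k,2)$ holomorphic limit of discrete series, including the borderline behaviour at $k_2=2$. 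The remaining automorphic input --- non-vanishing, cuspidality, multiplicity --- is by now essentially standard once Takeda's local results are available.
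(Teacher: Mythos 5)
Your overall strategy is the paper's: realize the members of $\Pi(\chi,\pi)$ through the similitude theta correspondence from four-dimensional orthogonal groups using Roberts' global results and Takeda's non-vanishing theorems, feed in the archimedean hypothesis via \cite{HST} Lemma 12, and get the weight-$(k,2)$ clause from the archimedean packet plus a parity argument. One difference of route: for non-vanishing and cuspidality the paper does not argue through the Rallis inner product formula and \cite{Shahidi81}; it quotes \cite{Paul} Corollary 4.17 for archimedean non-vanishing, \cite{Takeda2009} Theorem 1.2 for local-global non-vanishing, and \cite{Takeda2009} Theorem 1.3(2) together with \cite{HST} Lemma 5 for cuspidality. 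Your Rallis route is morally what underlies these results (and is the theme of Section 8), but as written it is more delicate than you allow: in this equal-size boundary range one needs the regularized second-term identity of \cite{GQT}, and one needs non-vanishing of the ramified local zeta integrals $Z_v$ for some choice of data, which is not literally the same as non-vanishing of the local theta lifts. Citing Takeda, as the paper does, avoids having to address this.

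The genuine gap is in your exhaustion step. You fix a single quadratic space $V$ of signature $(3,1)$ and claim that every $\Pi=\bigotimes_v'\Pi_v$ in the global packet is the theta lift of ``the corresponding member of the ${\rm GO}(V)(\bfA)$-packet attached to $(\pi,\chi,\delta)$.'' For a fixed $V$ this fails whenever $\pi_v$ is square-integrable at some finite places: there the local packet $\Pi(\chi_v,\pi_v)$ has two members, and the non-generic member is the local theta lift from the quaternionic form of the quadratic space, not from $V_v$ (this is precisely what Roberts' Table 4 records). To realize such a $\Pi$ one must change the global quadratic space at those finite places, keeping signature $(3,1)$ at infinity, and lift the Jacquet--Langlands transfer of $\pi\boxplus\chi$ to ${\rm GO}(X)(\bfA)$. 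This is exactly how the paper proceeds: it invokes the proof of Theorem 8.5 of \cite{Ro01}, which produces, for each $\Pi\in\Pi(\chi,\pi)$, a suitable four-dimensional space $X$ and a cuspidal $\sigma$ on ${\rm GO}(X,\bfA)$ constructed out of $\pi\boxplus\chi$ via Jacquet--Langlands with $\theta_2(\sigma_v)^{\vee}=\Pi_v$. Your citation of Roberts' Theorems 8.3 and 8.5 does contain the fix, but the argument must be phrased with $X$ varying rather than with one fixed $V$. The ``in particular'' clause is fine: your parity computation forcing $\chi(\tilde c)=(-1)^k$, hence $\chi=\chi_{K/\bfQ}^k$ among the two options, combined with \cite{HST} Lemma 12, matches the paper's appeal to \cite{HST} Lemma 12 and Corollary 3 (and \cite{Takeda2011} Proposition 6.5(2)), which identify the two archimedean constituents (holomorphic and large limits of discrete series) exactly for that central character.
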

\begin{proof}
Let $\Pi= \bigotimes' \Pi_v \in \Pi(\chi, \pi)$. The proof of Theorem 8.5 of \cite{Ro01} provides a suitable four dimensional quadratic space $X$ and representation $\sigma \in {\rm Irr}^{\rm temp}_{\rm cusp}({\rm GO}(X,\bfA))$ (which is constructed out of $\pi \boxplus \chi$ via Jacquet-Langlands) such that $\theta_2(\sigma_v)^{\vee}=\Pi_v$.
That the global theta lift of $\sigma=\otimes_v \sigma_v$ is non-vanishing follows from \cite{Paul} Corollary 4.17  (archimedean non-vanishing) and \cite{Takeda2009} Theorem 1.2 (local-global nonvanishing).
To show that $\Pi$ is cuspidal automorphic we apply \cite{Takeda2009} Theorem 1.3(2) and \cite{HST} Lemma 5. The statement about the existence of holomorphic representations in the $L$-packet at infinity follows from \cite{HST} Lemma 12 and Corollary 3 (see also \cite{Takeda2011} Proposition 6.5(2)), which shows that it contains two elements, a limit of large and of holomorphic discrete series representation) exactly for $\chi=\chi_{K/\bfQ}^k$.
\end{proof}

We also have the following results on the Petersson norm of the theta lifts:

\begin{lemma}[\cite{Ro01} Lemma 8.1,  \cite{GI} Lemma 7.1] \label{Lstd}
Let $\hat{\sigma}$ be an extension of $\sigma=(\pi, \chi_{K/\bfQ}^k)$ to a cuspidal automorphic representation of ${\rm GO}(3,1)(\bfA)$.
For all finite places $v$ where $\pi$ and $K/\bfQ$  is unramified we have $$L_v(s,\hat{\sigma},{\rm std})=L_v(s, \pi, r^{(-1)^k}).$$
\end{lemma}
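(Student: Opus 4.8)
The plan is to reduce the identity to an unramified local comparison of degree-four Euler factors, i.e.\ to matching semisimple conjugacy classes in $\mathrm{GL}_4(\bfC)$ at each finite place $v$ where $\pi$ and $K/\bfQ$ are unramified.

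First I would make the structure of $\mathrm{GO}(3,1)$ explicit, following the proof of Proposition~\ref{Lparam}: take the four-dimensional quadratic space $V_K$ of Hermitian $2\times 2$ matrices over $K$ with quadratic form the determinant, so that $\mathrm{GSO}(V_K)(\bfA)=\mathrm{GL}_2(\bfA_K)\times\bfA^*/\{(z\,\mathrm{Id}_2,\Nm_{K/\bfQ}z^{-1})\}$ via the isogeny $A\mapsto(X\mapsto AX\bar{A}^{t})$ on the $R_{K/\bfQ}\mathrm{GL}_2$-factor together with the similitude, and $\mathrm{GO}(V_K)/\mathrm{GSO}(V_K)$ is generated by the involution $\Theta\colon X\mapsto X^{t}$, which on $V_K\otimes\bfC\cong\bfC^2\otimes\bfC^2$ is $x\otimes y\mapsto y\otimes x$. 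Under this identification $\sigma$ corresponds to $(\pi,\chi_{K/\bfQ}^{k})$, and the four-dimensional standard representation ${}^L\mathrm{GSO}(V_K)\hookrightarrow\mathrm{GL}_4(\bfC)$ is, by the very definition of $r^{\pm}$ in Section~\ref{Asaidefinition} and the computation on pp.~1363--4 of \cite{Krishna12} recalled in the proof of Proposition~\ref{Lparam}, precisely the Asai map, so that the $L$-parameter of $\sigma_v$ composed with $\mathrm{std}$ equals $r^{+}\circ\varphi_{\pi_v}$. The normalization by the similitude $\chi=\chi_{K/\bfQ}^{k}$ is arranged exactly so that no further twist intervenes; this one checks directly on Satake parameters, at a split place $v=w\bar w$ both sides being $\prod_{i,j}(1-\alpha_i^{(w)}\alpha_j^{(\bar w)}v^{-s})^{-1}$, and at an inert $v$ using $r^{\pm}(1,1,c)(x\otimes y)=\pm\,y\otimes x$ together with $(r^{\pm}\circ\varphi_{\pi_v})(\mathrm{Frob}_v^{2})=\varphi_{\pi_w}(\mathrm{Frob}_w)\otimes\varphi_{\pi_w}^{c}(\mathrm{Frob}_w)$ to pin the factor down up to the sign.

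Next I would pass from $\mathrm{GSO}(V_K)$ to $\mathrm{GO}(V_K)$. The two extensions $\hat\sigma^{+}$, $\hat\sigma^{-}$ of $\sigma$ differ by composition with the determinant character of $\mathrm{O}(V_K)$, which under Langlands is the discriminant character $\chi_{K/\bfQ}$; hence their standard $L$-functions differ by the twist by $\chi_{K/\bfQ}$, yielding $r^{+}\circ\varphi_{\pi_v}$ and $r^{-}\circ\varphi_{\pi_v}$ respectively, consistently with $\mathrm{As}^{-}(\rho)\cong\mathrm{As}^{+}(\rho)\otimes\chi_{K/\bfQ}$. It then remains to see that the relevant extension is the one with sign $(-1)^{k}$. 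Since the type of the extension (the $\Theta$-eigenvalue) is a global invariant it can be read off at infinity: $\hat\sigma_\infty$ is the member of the archimedean packet relevant to Proposition~\ref{thetalifts} (the one whose theta lift to $\mathrm{GSp}_4(\bfR)$ is the holomorphic limit of discrete series of weight $(k,2)$), and by \cite{HST} Lemma~12 this extension has sign $(-1)^{k}$. Hence $\hat\sigma=\hat\sigma^{(-1)^{k}}$ and $L_v(s,\hat\sigma,\mathrm{std})=L_v(s,r^{(-1)^{k}}\circ\varphi_{\pi_v})=L_v(s,\pi,r^{(-1)^{k}})$ at every finite $v$ where $\pi$ and $K/\bfQ$ are unramified; this is exactly Lemma~8.1 of \cite{Ro01} and Lemma~7.1 of \cite{GI}, whose proofs carry out the above.

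The main obstacle — in fact essentially the only nontrivial bookkeeping — will be tracking the similitude (equivalently central-character) normalizations through the identification of $\mathrm{GSO}(3,1)(\bfA)$ with the quotient group, so that the comparison produces $r^{(-1)^{k}}\circ\varphi_{\pi_v}$ on the nose with no spurious twist, together with correctly isolating the sign $(-1)^{k}$ from the archimedean data; once these are fixed, the Satake-parameter matching at split and inert places is routine.
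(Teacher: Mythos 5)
The paper itself offers no proof of this lemma: it is quoted verbatim from \cite{Ro01} Lemma 8.1 and \cite{GI} Lemma 7.1, so the only internal material to compare with is the parameter bookkeeping in Proposition \ref{Lparam}. Measured against that, your proposal locates the crucial sign $(-1)^k$ in the wrong place. The correct mechanism is that the degree-four standard parameter of $\sigma_v=(\pi_v,\chi_v)$ on ${\rm GSO}(V_K)(\bfQ_v)$ is $r^{+}\circ\varphi_{\pi_v}\otimes\chi_v^{-1}$ --- this is exactly the entry $\Phi(O(V_K))\ni r^{+}\circ\varphi_{\pi_v}\otimes\chi_v^{-1}$ in the diagram closing the proof of Proposition \ref{Lparam}, and it is how \cite{Ro01} and \cite{GI} state the result --- so with $\chi=\chi_{K/\bfQ}^k$ one gets $r^{(-1)^k}\circ\varphi_{\pi_v}$ on the nose. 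Your intermediate claim that ``the normalization by the similitude $\chi=\chi_{K/\bfQ}^k$ is arranged exactly so that no further twist intervenes,'' i.e.\ that ${\rm std}$ of the ${\rm GSO}$-parameter is $r^{+}\circ\varphi_{\pi_v}$ untwisted, is false at inert places when $k$ is odd: the split-place check you invoke cannot detect the twist because $\chi_{K/\bfQ,v}=1$ there, and at inert places the $\chi_v^{-1}$-twist is precisely what changes the linear factors $(1\mp\alpha q_v^{-s})(1\mp\beta q_v^{-s})$ and hence distinguishes $r^{+}$ from $r^{-}$.

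Having suppressed the twist, you then try to recover the sign from the choice of extension $\hat\sigma$ of $\sigma$ to ${\rm GO}$, pinned down by $\hat\sigma_\infty$ via \cite{HST} Lemma 12. This step cannot work. First, the lemma is stated (here and in \cite{Ro01}, \cite{GI}) for an \emph{arbitrary} extension $\hat\sigma$, so the unramified finite Euler factors do not depend on that choice; indeed tensoring by the determinant character of ${\rm O}(V_K)(\bfQ_v)$ is not the same as twisting the degree-four standard parameter by $\chi_{K/\bfQ,v}$ (the latter would correspond to a character of $\bfQ_v^\times$ pulled back through the similitude or spinor norm, not to the sign character of the disconnected group), so your assertion that the two extensions have standard $L$-functions differing by a $\chi_{K/\bfQ}$-twist is unjustified. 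Second, a global extension of $\sigma$ to ${\rm GO}(3,1)(\bfA)$ is a family of independent local choices, so its archimedean component cannot determine, let alone alter, the local factors at finite unramified places. The fix is simply to carry the similitude datum $\chi_v$ through the identification of $L$-parameters (as in Proposition \ref{Lparam}(3) and the references), after which the extension to ${\rm GO}$ plays no role in the statement at all.
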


\begin{thm}[Rallis inner product formula, \cite{GI} Lemma 7.11, \cite{GQT} Theorem 1.2] 
For $f=\bigotimes f_v \in \pi$ and $\varphi=\bigotimes \varphi_v \in S(V(\bfA))^2$ we have
$$\| \theta(f,\varphi)\|^2=\frac{L^S(1,\hat{\sigma}, {\rm std})}{\zeta^S(2) \zeta^S(4)} \prod_{v \in S} Z_v(f_v, \varphi_v)$$
\end{thm}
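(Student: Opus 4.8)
The plan is to deduce this identity from the Rallis inner product formula in the form established in \cite{GQT} Theorem~1.2 (see also \cite{GI} Lemma~7.11), applied to the theta correspondence between $\mathrm{GO}(V)$ --- with $V$ the quadratic space of signature $(3,1)$ and discriminant $K/\bfQ$ underlying $\mathrm{GO}(3,1)$ --- and $\mathrm{GSp}_4$. Almost all of the work lies in matching hypotheses and normalizations rather than in proving anything new.

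First I would rewrite the left-hand side as a Petersson integral: by definition $\|\theta(f,\varphi)\|^2=\int_{[\mathrm{Sp}_4]}|\theta(f,\varphi)(g)|^2\,dg$, and expand each factor via $\theta(f,\varphi)(g)=\int_{[\mathrm{O}(V)]}\theta(g,h;\varphi)f(h)\,dh$ (here $f$ is taken on the orthogonal side, through the extension $\hat\sigma$ of $\sigma$), so that the norm becomes a triple integral over $[\mathrm{Sp}_4]\times[\mathrm{O}(V)]\times[\mathrm{O}(V)]$. As a function of $g$, the product $\theta(g,h_1;\varphi)\overline{\theta(g,h_2;\varphi)}$ is the theta kernel attached to the dual pair $(\mathrm{Sp}_4,\mathrm{O}(V\oplus(-V)))$ evaluated at $(h_1,h_2)\in\mathrm{O}(V)\times\mathrm{O}(-V)$. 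Since $V\oplus(-V)$ is split of dimension $8$, this pair lies in the convergent (first-term) range of the Siegel--Weil formula of Kudla--Rallis, as extended in \cite{GQT}; integrating the kernel over $[\mathrm{Sp}_4]$ therefore replaces it by a value $E\bigl((h_1,h_2),s_0;\Phi\bigr)$ of a Siegel--Eisenstein series on $\mathrm{O}(V\oplus(-V))\cong\mathrm{O}_8$, with section $\Phi$ built in the standard way from $\varphi\otimes\bar\varphi$.

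Second, the remaining integral $\int_{[\mathrm{O}(V)]\times[\mathrm{O}(V)]}E\bigl((h_1,h_2),s_0;\Phi\bigr)f(h_1)\overline{f(h_2)}\,dh_1\,dh_2$ is precisely the global doubling zeta integral of Piatetski-Shapiro--Rallis for $\hat\sigma$ on $\mathrm{O}(V)$. By the basic identity it unfolds into an Euler product of local zeta integrals $Z_v(f_v,\varphi_v)$; for $v\notin S$ (a finite set containing the archimedean place and all places where $\pi$, $\chi$ or $K/\bfQ$ ramifies) the local integral of spherical data against the normalized spherical section of the $\mathrm{O}_8$ Siegel--Eisenstein series $E(\,\cdot\,,s_0;\Phi)$ equals $L_v(1,\hat\sigma,\mathrm{std})/\bigl(\zeta_v(2)\zeta_v(4)\bigr)$, the two zeta denominators being the Gindikin--Karpelevich-type factors in its normalization at the Siegel--Weil point $s_0$. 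Pulling the product over $v\notin S$ out as $L^S(1,\hat\sigma,\mathrm{std})/\bigl(\zeta^S(2)\zeta^S(4)\bigr)$ and collecting the remaining factors as $\prod_{v\in S}Z_v(f_v,\varphi_v)$ yields the stated formula, once one checks that the global constant produced by Siegel--Weil is $1$ in the chosen (Tamagawa) normalization of measures.

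The main obstacle is exactly the part that must be imported from \cite{GQT} and \cite{GI} rather than reproved: identifying the correct point $s_0$ of the Siegel--Weil/doubling identity and verifying that the normalization of the Eisenstein series together with the choice of Haar measures produces precisely $\zeta^S(2)\zeta^S(4)$ in the denominator and no parasitic constant --- the kind of bookkeeping in which normalization and sign errors are notoriously easy to make, which is why in practice one simply quotes \cite{GQT} Theorem~1.2 and \cite{GI} Lemma~7.11. A secondary point worth recording is the nonvanishing of the archimedean local integral $Z_\infty(f_\infty,\varphi_\infty)$ for the holomorphic (limit of) discrete series vectors relevant to Proposition~\ref{thetalifts}: this is what lets one detect nonvanishing of $\theta(f,\varphi)$ from nonvanishing of $L^S(1,\hat\sigma,\mathrm{std})=L^S(1,\pi,r^{(-1)^k})$, and it is supplied by the archimedean computations cited there (\cite{Paul}).
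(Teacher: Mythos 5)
The paper offers no proof of this statement at all --- it is imported verbatim from \cite{GI} Lemma 7.11 and \cite{GQT} Theorem 1.2 --- and your proposal takes essentially the same route, correctly sketching the standard seesaw/Siegel--Weil/doubling derivation (inner integral over $[{\rm Sp}_4]$ of the doubled theta kernel for $({\rm Sp}_4, {\rm O}(V\oplus(-V)))$, then the Piatetski-Shapiro--Rallis doubling integral for $\hat\sigma$ on ${\rm O}(V)$ with the unramified computation producing $L_v(1,\hat\sigma,{\rm std})/\zeta_v(2)\zeta_v(4)$) that underlies those citations, while explicitly deferring the normalization of the Siegel--Weil point and measures to the references. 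The one imprecision --- whether this pair is truly in the convergent first-term range or needs the regularized second-term identity of \cite{GQT} --- is exactly the bookkeeping you rightly leave to the cited results, so nothing essential is missing.
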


Together these results imply that all elements of $\Pi(\chi_{K/\bfQ}^k, \pi)$ have Petersson norm involving $L(1, {\rm As}^{(-1)^k}(\pi))$. \cite{Katsurada08} Lemma 5.1 would (under certain conditions on the  $q$-integrality of the theta lifts and properties of Siegel Eisenstein series) produce congruences of a theta lift with a form with Petersson norm not involving $L(1, {\rm As}^{(-1)^k}(\pi))$. In particular this form 
%could not be a theta lift, and  
would give rise to a $\Pi \notin \Pi(\chi, \pi)$ as required in Proposition \ref{nonthetairred}.

\bibliographystyle{amsalpha}
\bibliography{paramod}

\end{document}